\numberwithin{equation}{section}
\newtheorem{theorem}{Theorem}[section]
\newtheorem{lemma}[theorem]{Lemma}
\newtheorem{proposition}[theorem]{Proposition}
\newtheorem{corollary}[theorem]{Corollary}
\newtheorem{conjecture}[theorem]{Conjecture}
\theoremstyle{definition}
\newtheorem{def-prop}[theorem]{Definition-Proposition}
\newtheorem{remark}[theorem]{Remark}
\newtheorem{example}[theorem]{Example}
\newtheorem*{acknowledgement}{Acknowledgement}
\newtheorem{question}[theorem]{Question}
\newtheorem{problem}[theorem]{Problem}
\newtheorem*{Mysketch}{Sketch of proof} 
  {\pushQED{\qed}\begin{Mysketch}}
  {\popQED\end{Mysketch}}
\DeclareMathOperator{\Min}{Min}
\DeclareMathOperator{\height}{ht}
\DeclareMathOperator{\mongrade}{mon-grade}
\DeclareMathOperator{\supp}{supp}
\DeclareMathOperator{\rk}{rk}
\newcommand{\NN}{{\mathbb N}}
\newcommand{\QQ}{{\mathbb Q}}
\newcommand{\RR}{{\mathbb R}}
\def\E{{\mathcal E}}
\def\G{{\mathcal G}}
\def\F{{\mathcal F}}
\def\G{{\mathcal G}}
\def\H{{\mathcal H}}
\def\a{{\bf a}}
\def\b{{\bf b}}
\def\e{{\bf e}}
\def\y{{\bf y}}
\def\z{{\bf z}}
\def\1{{\bf 1}}
\def\0{{\bf 0}}
\begin{document}

\title{Membership criteria and containments of powers of monomial ideals}

\author{Huy T\`ai H\`a}
\address{Tulane University \\ Department of Mathematics \\
6823 St. Charles Ave. \\ New Orleans, LA 70118, USA}
\email{tha@tulane.edu}
\urladdr{http://www.math.tulane.edu/$\sim$tai/}

\author[Trung]{Ngo Viet Trung}
\address{International Centre for Research and Postgraduate Training\\ Institute of Mathematics \\ Vietnam Academy of Science and Technology \\ 18 Hoang Quoc Viet, Hanoi, Vietnam}
\email{nvtrung@math.ac.vn}
\urladdr{}

\begin{abstract}
We present a close relationship between matching number, covering numbers and their fractional versions in combinatorial optimization and ordinary powers, integral closures of powers, and symbolic powers of monomial ideals.
This relationship leads to several new results and problems on the containments between these powers.
\end{abstract}

\subjclass[2010]{13C05, 05C65, 90C27}
\keywords{monomial ideal, ordinary power, symbolic power, integral closure of a power, hypergraph, matching, covering, gap estimate, edge ideal, containments between powers of ideals, generating degree}

\maketitle

\centerline{\em \small Dedicated to Le Tuan Hoa on the occasion of his 60th birthday }

\section*{Introduction}

Let $M$ be an $n \times m$ matrix of non-negative integers and $\a \in \NN^n$.
Consider the integer programming problems \par
\begin{enumerate}
\item maximize $\1^m \cdot \y$, \par
\noindent subject to $M \cdot \y \le \a,\ \y \in \NN^m$
\item
minimize $\a \cdot \z$,\par
\noindent subject to $M^T \cdot \z \ge \1^n,\ \z \in \NN^n$\par
\end{enumerate}
where $\1^m = (1,\dots,1) \in \NN^m$ and $\1^n = (1,\dots,1) \in \NN^n$.

Let $\nu_\a(M)$, $\tau_\a(M)$ and $\nu^*_\a(M)$, $\tau_\a^*(M)$ denote the optimal values of these integer programming problems and their fractional relaxations, respectively. Then
$$\nu_\a(M) \le \nu_\a^*(M) = \tau^*_\a(M) \le \tau_\a(M),$$
where the middle equality follows from the duality in linear programming.
The numbers $\nu_\a(M)$, $\tau_\a(M)$, $\nu^*_\a(M)$, $\tau_\a^*(M)$ are important invariants in combinatorial optimization.
For instance, if $M$ is the incidence matrix of a hypergraph $\H$ and $\a = \1^n$,  then $\nu_\a(M)$ and $\tau_\a(M)$ are the matching and covering numbers $\nu(\H)$ and $\tau(\H)$ of $\H$.

Let $I$ be a monomial ideal in a polynomial ring $R$ over a field $K$.
For $k \ge 1$, let $\overline{I^k}$ and $I^{(k)}$ denote the integral closure of $I^k$ and the $k$-th symbolic power of $I$. We may call $\overline{I^k}$ the $k$-th \emph{integral power} of $I$.
Define $\nu_\a(I)$, $\tau_\a(I)$, $\nu^*_\a(I)$, $\tau_\a^*(I)$ to be $\nu_\a(M)$, $\tau_\a(M)$, $\nu^*_\a(M)$, $\tau_\a^*(M)$, respectively, where $M$ is the exponent matrix of the monomial generators of $I$.
The main goal of this paper is to show that the invariants $\nu_\a(I)$, $\tau_\a(I)$, $\nu^*_\a(I)$, and $\tau_\a^*(I)$ can be used to
study the behavior of $I^k$, $\overline{I^k}$ and $I^{(k)}$.

Throughout the paper, $R = K[x_1,\dots,x_n]$ denotes a polynomial ring over a field $K$.
For a vector $\a = (\alpha_1,\dots,\alpha_n) \in \NN^n$, set $x^\a = x_1^{\alpha_1}\cdots x_n^{\alpha_n}$.
Our work hinges on the following effective membership criteria for a monomial $x^\a$ to be in $I^k$, $\overline{I^k}$ and $I^{(k)}$.
\medskip

\noindent{\bf Propositions \ref{ordinary} and \ref{symbolic}.}
Let $I$ be an arbitrary monomial ideal in $R$. Then \par
{\rm (i)} $x^\a \in  I^k$ if and only if $\nu_\a(I) \ge k$,\par
{\rm (ii)} $x^\a \in \overline{I^k}$  if and only if $\nu_\a^*(I) \ge k$.\par
\noindent Moreover, if $I$ is a squarefree monomial ideal, then \par
{\rm (iii)} $x^\a \in I^{(k)}$ if  and only if $\tau_\a(I) \ge k$.
\medskip

If $I$ is squarefree monomial ideal, then  $\nu_\a(I)$, $\tau_\a(I)$, $\nu_\a^*(I)$, and $\tau_\a^*(I)$  are equal to the matching, covering, fractional matching, and fractional covering numbers of a hypergraph. The gaps between these invariants have been studied extensively in combinatorial optimization (see, for example, the survey \cite{Du}).

We shall combine gap estimates between (fractional) matching and covering numbers of hypergraphs  with the membership criteria mentioned above to derive containments between corresponding powers of squarefree monomial ideals. Specifically, by letting $d(I)$ denote the maximum degree of the minimal generators of a monomial ideal $I$, we obtain the following result.
\medskip

\noindent{\bf Theorem \ref{thm.containments}.}
Let $I$ be a squarefree monomial ideal, and let $r = d(I)$. Then, for any $k \ge 1$, we have
\begin{enumerate}
\item[{\rm (i)}] $\overline{I^{(r-1)(k-1) + \big\lceil\frac{k}{r}\big\rceil}} \subseteq I^k$;
\item[{\rm (ii)}] $I^{(\lceil (1+ \frac{1}{2} + \cdots + \frac{1}{r})k\rceil)} \subseteq \overline{I^k}$;
\item[{\rm (iii)}] $I^{(rk-r+1)} \subseteq I^k.$
\end{enumerate}
\medskip

The containments in Theorem \ref{thm.containments} are new, even in the case where $r = 2$, i.e., when $I$ is the edge ideal of a graph.
Furthermore, the containment $I^{(rk-r+1)} \subseteq I^k$ yields a new bound on the {\em resurgence} number of $I$; this invariant, for any homogeneous ideal $I \subseteq R$, was defined by Bocci and Harbourne \cite{BH} to be
$$\rho(I) := \sup\Big\{\frac{h}{k} ~\Big|~ I^{(h)} \not\subseteq I^k\Big\}.$$

\noindent{\bf Corollary \ref{HB}.}
Let $I$ an arbitrary squarefree monomial ideal. Then
$$\rho(I) \le d(I).$$

On the other hand, we shall also use known containments of powers of ideals to provide new estimates for gaps between the invariants
$\nu_\a(I)$, $\nu_\a^*(I)$, $\tau_\a(I)$, and $\tau^*_\a(I)$.
Particularly, thanks to the celebrated Brian\c{c}on-Skoda theorems of Lipman and Sathaye \cite{LS} and Lipman and Teissier \cite{LT} (see also \cite{Huneke}), we have
$\overline{I^{k + \min\{m,n\}-1}} \subseteq I^k$ for all $k \ge 1$, where $m$ is the minimal number of generators of $I$.
Applying the membership criteria for $I^k$ and $\overline{I^k}$,  we obtain the following bound
for the gap between $\nu_\a(M)$ and $\nu^*_\a(M)$, which seems to be unknown in combinatorial optimization.
\medskip

\noindent{\bf Theorem \ref{BS}.}
Let $M$ be an $n \times m$ matrix of non-negative integers. Then for all $\a \in \NN^n$,
$$\nu^*_\a(M) < \nu_\a(M) + \min\{m,n\}.$$

If $I$ is a squarefree monomial ideal, it is known that $I^{(hk-h+1)} \subseteq I^k$ for all $k \ge 1$,
where $h$ is the maximal height of an associated prime of $I$.
This containment gives a positive answer to a conjecture of Harbourne (see \cite{Ba, CEHH}).
We show that it yields the following estimate for the gap between $\tau_\a(M)$ and $\nu_\a(M)$.
\medskip

\noindent{\bf Theorem \ref{Ha}.}
Let $M$ be the incidence matrix of a simple hypergraph $\H$.
Let $h$ be the maximal size of a minimal cover of $\H$.
Then for all $\a \in \NN^n$,
$$\tau_\a(M) \le h\nu_\a(M).$$

One of the famous unsolved problems in combinatorics is Ryser's conjecture \cite{He},
which states that for an $r$-partite $r$-uniform hypergraph $\H$,
$$\tau(\H) \le (r-1)\nu(\H).$$
Using the membership criteria for $I^k$ and $I^{(k)}$,
we can reformulate this conjecture as a problem on the containment between ordinary and symbolic powers of squarefree monomial ideals.
\medskip

\noindent{\bf Conjecture \ref{conj.containment}.}
Let $I$ be the edge ideal of an $r$-partite hypergraph of rank $\le r$. Then for all $k \ge 1$,
$$I^{((r-1)(k-1)+1)} \subseteq I^k.$$

The containment in Conjecture \ref{conj.containment} is true if we replace $I^k$ by $\overline{I^k}$ or $I^{((r-1)(k-1)+1)}$ by $\overline{I^{(r-1)(k-1)+1}}$ (see Theorem \ref{symbolicVSintegral3}).

The membership criteria in Propositions \ref{ordinary} and \ref{symbolic} further allow us to study equalities between $I^k$, $\overline{I^k}$, and $I^{(k)}$, and their combinatorial interpretations.

Following the terminology in combinatorial optimization \cite{Du, Sch}, we say that an $n \times m$ matrix $M$ of non-negative integers has the {\em integer round-down property} if $\nu_\a(M) = \lfloor \nu^*_\a(M) \rfloor$ for all $\a \in \NN^n$. On the other hand, we call a hypergraph $\H$ \emph{Mengerian} (respectively, \emph{K\"onig}) if $\nu_\a(M) = \tau_\a(M)$ for all $\a \in \NN^n$ (respectively, for $\a = \1^n$), where $M$ is the incidence matrix of $\H$. Similarly, we call a hypergraph $\H$ \emph{Fulkersonian} if $\tau^*_\a(M) = \tau_\a(M)$ for all $\a \in \NN^n$. A hypergraph obtained from $\H$ by a sequence of deleting and contracting vertices is called a \emph{minor} of $\H$.

The membership criteria for $I^k$, $\overline{I^k}$ and $I^{(k)}$ immediately yield the following results.
\medskip

\noindent{\bf Theorem \ref{normal}.}  \cite{DV, T2}
Let $I$ be an arbitrary monomial ideal. Then
$\overline{I^k} = I^k$ for all $k \ge 1$ if and only if the exponent matrix of $I$ has the integer round-down property.
\medskip

\noindent{\bf Theorem \ref{correspondence}}  \cite{GVV, HHTZ, T1}
Let $I$ be the edge ideal of a hypergraph $\H$. Then \par
{\rm (i)} $I^{(k)} = I^k$ for all $k \ge 1$ if and only if $\H$ is Mengerian, \par
{\rm (ii)} $I^{(k)} = \overline{I^k}$ for all $k \ge 1$ if and only if $\H$ is Fulkersionian.
\medskip

As an application, we give an algebraic version of the long-standing conjecture of Conforti and Cornu\'ejols, which states that a hypergraph $\H$ is Mengerian if and only if all minors of $\H$ are K\"onig \cite{CC}. For a monomial ideal $I$,  we denote by $\mongrade(I)$ the maximal length of a regular sequence of monomials in $I$.
\medskip

\noindent {\bf Conjecture \ref{CC}.}
Let $I$ be a squarefree monomial ideal such that $\mongrade(J) = \height(J)$
for all monomial ideals $J$ obtained from $I$ by setting some variables equal to 0 or 1.
Then $I$ is a normal ideal.
\medskip

Finally, to give an application of the membership criteria in a topic other than containments between powers of ideals, we study the problem of whether for any squarefree monomial ideals $I$, $d(I^{(k)}) \le kd(I)$ for all $k \ge 1$.
This problem is motivated by a similar question of Huneke \cite{Hu} for homogeneous prime ideals.
We show that this problem is amount to whether $n \le ht(I)d(I)$, where $n$ is the number of variables appearing in the generating monomials of $I$. This leads us to counter-examples to the aforementioned question, in which the difference $d(I^{(k)}) - kd(I)$ can be arbitrarily large.
Other counter-examples were given recently by Asgharzadeh \cite{As} (with an attribute to Hop D. Nguyen).\footnote{Our examples were obtained independently, and that was communicated to Huneke on August 8, 2017.}

The paper is divided into 6 sections.
Section \ref{sec.membership} presents the membership criteria for $I^k$, $\overline{I^k}$, $I^{(k)}$ in terms of the numbers
$\nu_\a(I)$, $\nu_\a^*(I)$, $\tau_\a(I)$, $\tau^*_\a(I)$.
In Section \ref{sec.matchingcovering} we shows that these numbers are the matching and covering numbers of a hypergraph.
Section \ref{sec.gap2containment} is devoted to containments between different powers of $I$, that arise from estimates for the gaps between $\nu_\a(I)$, $\nu_\a^*(I)$, $\tau_\a(I)$, and $\tau^*_\a(I)$.
Section \ref{sec.containment2gap} is to deduce new estimates for the gaps between these numbers from known containments between different powers of $I$.
Section \ref{sec.equality} examines the equalities between $I^k$, $\overline{I^k}$, and $I^{(k)}$.
Section \ref{sec.generatingdegree} deals with the generating degrees of symbolic powers and the aforementioned question of Huneke.

\begin{acknowledgement}
This paper started during a research stay of the authors at Vietnam Institute for Advanced Study in Mathematics. The authors would like to thank the institute for its support and hospitality.
The first author is partially supported by Simons Foundation (grant \# 279786) and Louisiana Board of Regents (grant \# LEQSF(2017-19)-ENH-TR-25). The second author is supported by Vietnam National Foundation for Science and Technology Development (grant \# 101.04-2017.19).
\end{acknowledgement}


\section{Membership problems for powers of monomial ideals} \label{sec.membership}

Let $I$ be a monomial ideal in $R = K[x_1,\dots,x_n]$ and let $x^{\a_1},\dots,x^{\a_m}$ be the minimal monomial generators of $I$.
We call the matrix $M$, whose columns are the vectors $\a_1,\dots,\a_m$, the \emph{exponent matrix} of $I$.
By definition,
\begin{align*}
\nu_\a(I) & := \max\{\1^m \cdot \y ~|~ \y \in \NN^m, M \cdot \y \leqslant \a\},\\
\nu_\a^*(I) & := \max\{\1^m \cdot \y ~|~ \y \in \RR_{\ge 0}^m, M \cdot \y \leqslant \a\},\\
\tau_\a(I) & := \min\{\a\cdot \z ~|~ \z \in \NN^n, M^T \cdot \z \ge \1^m\},\\
\tau_\a^*(I) & := \min\{\a\cdot \z ~|~ \z \in \RR_{\ge 0}^n, M^T \cdot \z \ge \1^m\},
\end{align*}
where $\NN$ denotes the set of natural numbers, including 0, and $\RR_{\ge 0}$ is the set of non-negative real numbers.

The aim of this section is to give effective conditions for a monomial $x^\a$ to be an element of $I^k$, $\overline{I^k}$, or $I^{(k)}$, $k \ge 1$, in terms of the aforementioned invariants associated to $I$ and $\a$. These criteria were already presented without proofs in the lecture note \cite{T2}.

\begin{proposition} \label{ordinary} \cite[Proposition 3.1]{T2}
Let $I$ be an arbitrary monomial ideal. Then \par
{\rm (i)} $x^\a \in  I^k$ if and only if $\nu_\a(I) \ge k$,\par
{\rm (ii)} $x^\a \in \overline{I^k}$  if and only if $\nu_\a^*(I) \ge k$.\par
\end{proposition}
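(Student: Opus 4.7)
The plan is to reduce each part to a routine unpacking of the relevant algebraic definition into the corresponding integer/linear program.

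For part (i), I would start from the fact that $I^k$ is generated as a monomial ideal by all $k$-fold products $x^{\a_{i_1}}\cdots x^{\a_{i_k}}$ of generators (with repetition allowed). Hence $x^\a \in I^k$ is equivalent to the existence of indices $i_1,\dots,i_k \in \{1,\dots,m\}$ and $\b \in \NN^n$ with $\a = \a_{i_1}+\cdots+\a_{i_k}+\b$. Letting $y_j$ count the number of times the generator $x^{\a_j}$ is used, this becomes exactly the existence of $\y \in \NN^m$ with $\1^m\cdot\y = k$ and $M\cdot\y \le \a$. The ``if'' direction of (i) then amounts to producing such a $\y$ given $\nu_\a(I)\ge k$: one takes an optimal integer solution with $\1^m\cdot\y \ge k$ and decreases coordinates by $1$'s one at a time (this preserves $M\cdot\y\le\a$) until the coordinate sum equals $k$.

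For part (ii), the main tool is the standard polyhedral description of the integral closure of a monomial ideal: $\overline{I^k}$ is the monomial ideal generated by all $x^\a$ such that $\a\in k\cdot NP(I)$, where the Newton polyhedron is
\[
NP(I) = \mathrm{conv}(\a_1,\dots,\a_m) + \RR_{\ge 0}^n.
\]
I would cite this (e.g., from Herzog--Hibi's monomial ideals book) rather than reprove it. Given this description, $x^\a \in \overline{I^k}$ is equivalent to the existence of $\lambda_1,\dots,\lambda_m\ge 0$ with $\sum_j\lambda_j = 1$ and $\b\in\RR_{\ge 0}^n$ with $\a = k\sum_j\lambda_j\a_j + \b$. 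Setting $y_j := k\lambda_j$ yields $\y\in\RR_{\ge 0}^m$ with $\1^m\cdot\y = k$ and $M\cdot\y\le \a$, showing $\nu_\a^*(I)\ge k$.

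Conversely, given $\nu_\a^*(I)\ge k$, pick a feasible $\y\in\RR_{\ge 0}^m$ with $s := \1^m\cdot\y \ge k$ and $M\cdot\y\le\a$. Then
\[
\a \;=\; \sum_j y_j\a_j + \b_0 \;=\; k\sum_j\tfrac{y_j}{s}\a_j + (s-k)\sum_j\tfrac{y_j}{s}\a_j + \b_0,
\]
with $\b_0 := \a - M\cdot\y \in \RR_{\ge 0}^n$. Since $s\ge k$ and each $\a_j\in\NN^n\subseteq\RR_{\ge 0}^n$, the last two summands lie in $\RR_{\ge 0}^n$, while the first is in $k\cdot\mathrm{conv}(\a_1,\dots,\a_m)$. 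Hence $\a\in k\cdot NP(I)$, so $x^\a\in\overline{I^k}$.

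I do not expect a genuine obstacle here: the only step requiring care is quoting the Newton-polyhedron characterization of $\overline{I^k}$, and the only combinatorial subtlety is that in (i) we must truncate an integer feasible solution down to coordinate sum exactly $k$, whereas in (ii) the scaling $y_j \mapsto k\lambda_j$ uses that we are over $\RR_{\ge 0}$ rather than $\NN$.
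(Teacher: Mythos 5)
Your argument is correct, and part (i) is essentially the paper's own proof: both unpack membership in $I^k$ as divisibility by a product of generators and translate the multiplicities into an integer vector $\y$ with $M\cdot\y\le\a$ (the truncation step you spell out is implicit in the paper's use of ``sum $\ge k$''). For part (ii), however, you take a genuinely different route. The paper never touches the Newton polyhedron: it uses the classical fact that $x^\a\in\overline{I^k}$ if and only if $x^{q\a}\in I^{qk}$ for some $q\ge 1$, reduces to part (i) applied to $q\a$, and passes between rational and integer feasible solutions by clearing denominators; a pleasant side effect is the identity $\nu^*_\a(I)=\max_{q\ge1}\nu_{q\a}(I)/q$ recorded in Remark \ref{nu*}, which the paper reuses later (in the proof of Proposition \ref{parallel-2}(ii)). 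Your proof instead quotes the description $\overline{I^k}=\bigl(x^\a : \a\in k\cdot NP(I)\bigr)$ and checks that membership in $k\cdot NP(I)$ is exactly feasibility of the LP with objective value $\ge k$; this is clean and self-contained once the Newton-polyhedron fact is granted, and your handling of the converse (splitting $\a = k\sum_j (y_j/s)\a_j + (s-k)\sum_j (y_j/s)\a_j + \b_0$) is fine, though you tacitly use that the LP optimum is attained (the feasible region is a polytope since no generator is the zero vector, so this is harmless, and the paper's proof makes a comparable implicit use of attainment and rationality of optimal solutions). In short: same substance for (i), a different but equally valid key lemma for (ii); the paper's choice has the advantage of yielding Remark \ref{nu*} needed elsewhere, while yours outsources more to a standard cited result.
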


\begin{proof}
(i) It is clear that $x^\a \in  I^k$ if and only if $x^\a$ is divisible by a monomial of the form
$(x^{\a_1})^{\beta_1}\cdots (x^{\a_m})^{\beta_m}$ with $\beta_1 + \cdots + \beta_m \ge  k$.
The divisibility means that $\beta_1\a_1 + \cdots +  \beta_m\a_m \le \a$.
Set $\y = (\beta_1,\dots,\beta_m)$.
Then $\beta_1 + \cdots + \beta_m = \1^m \cdot \y$ and $\beta_1\a_1 + \cdots +  \beta_m\a_m = M \cdot \y$.
From this observation we can conclude that $x^\a \in  I^k$ if and only if $\nu_\a(I) \ge k$. \par

(ii) It is well-known that $x^\a \in \overline{I^k}$ if and only if there is an integer $q \ge 1$ such that $x^{q\a} \in I^{q k}$. By (i), this means that $\nu_{q \a}(I) \ge q k$.
This condition implies the existence of $\y \in \NN^m$ such that $\1^m \cdot \y \ge q k$ and $M \cdot \y \le q \a$.
Since $\frac{1}{q}\y \cdot \1^m \ge k$ and $M \cdot \frac{1}{q}\y \le \a$, we obtain $\nu_\a^*(I) \ge k$.\par

Conversely, if $\nu_\a^*(I) \ge k$, then there exists $\y' \in \RR_{\ge 0}^n$ such that $\1^m \cdot \y' \ge k$ and $M \cdot \y' \le \a$.
Since $M$ is a matrix of integers and $\a \in \NN^n$, we may choose $\y'$ to be a rational vector. Then $\y' = \frac{1}{q}\y$ for some $\y \in \NN^m$ and $q \in \NN$. Since $\y \cdot \1^m \ge q k$ and $M \cdot \y \le q \a$, we obtain $\nu_{q \a}(I) \ge q k$. That is, $x^{q\a} \in I^{qk}$ and, so, $x^\a \in \overline{I^k}$. Hence, we can conclude that $x^\a \in \overline{I^k}$ if and only if $\nu_\a^*(I) \ge k$.
\end{proof}

\begin{remark} \label{nu*}
The proof of Proposition \ref{ordinary}(ii) shows that
$$\nu_\a^*(I) = \max_{q \ge 1} \frac{\nu_{q\a}(I)}{q}.$$
\end{remark}

To present an effective criterion for $x^\a \in I^{(k)}$ we first need to know the minimal primes of $I$.
Let $\Min(I)$ denote the set of minimal associated primes of $I$.
For every prime ideal $P \in \Min(I)$, there is a subset $F \subseteq [1,n]$ such that $P = P_F$,
where $P_F$ denotes the ideal generated by the variables $x_i$, $i \in F$.
We denote by $I_P$ the $P$-primary component of $I$.

\begin{proposition} \label{symbolicpower}
Let $I$ be an arbitrary monomial ideal.
Then $x^\a \in I^{(k)}$ if and only if $\nu_\a(I_P) \ge k$ for all $P \in \Min(I)$.
\end{proposition}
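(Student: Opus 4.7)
The natural strategy is to reduce Proposition~\ref{symbolicpower} to Proposition~\ref{ordinary}(i) applied to each $P$-primary component of $I$. The bridge is the identity
\[
I^{(k)} \;=\; \bigcap_{P \in \Min(I)} (I_P)^k,
\]
which converts a single membership question in $I^{(k)}$ into simultaneous membership questions in ordinary powers of the monomial ideals $I_P$. Granting this, the proposition follows instantly: by Proposition~\ref{ordinary}(i), $x^\a \in (I_P)^k$ is equivalent to $\nu_\a(I_P) \ge k$, and intersecting over all $P \in \Min(I)$ yields the claim.

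To establish the displayed identity, I would start from the definition $I^{(k)} = \bigcap_{P \in \Min(I)}(I^k R_P \cap R)$ and show that, for each $P \in \Min(I)$,
\[
I^k R_P \cap R \;=\; (I_P)^k.
\]
Since $P$ is a minimal prime of $I$, the $P$-primary component $I_P$ satisfies $I_P R_P = I R_P$. Localization commutes with taking powers, so $I^k R_P = (I R_P)^k = (I_P)^k R_P$. Moreover $(I_P)^k$ is itself $P$-primary (its radical is again $P$) and hence coincides with its own contraction from $R_P$. Combining these two observations yields the desired equality of ideals in $R$.

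The only delicate step is this commutation of primary components with powers at a \emph{minimal} prime; once it is in hand the conclusion is a direct appeal to the already-established Proposition~\ref{ordinary}(i), and no new combinatorial ingredient beyond what was set up for ordinary powers is required. I expect the main potential pitfall to be conceptual rather than technical: one must not confuse the $P$-primary component of $I^k$ (which may in general differ from $(I_P)^k$ when $P$ is embedded) with the minimal case we actually use here, where the two notions agree precisely because minimality forces the extension-contraction $I R_P \cap R$ to be unambiguous.
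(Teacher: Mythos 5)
Your overall route is essentially the paper's: the paper reduces the statement to Proposition~\ref{ordinary}(i) via the identity $I^{(k)} = \bigcap_{P \in \Min(I)} I_P^k$, which it simply cites from \cite[Lemma 3.1]{HHT}, whereas you sketch a proof of that identity from the definition $I^{(k)} = \bigcap_{P \in \Min(I)} (I^k R_P \cap R)$. Your steps $I R_P = I_P R_P$ (valid precisely because $P$ is a \emph{minimal} prime of $I$) and $I^k R_P = (I_P)^k R_P$ are fine, and the final appeal to Proposition~\ref{ordinary}(i) is exactly what the paper does.

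The one step that is not justified as written is the claim that $(I_P)^k$ is $P$-primary ``because its radical is again $P$''. Prime radical does not imply primary (e.g.\ $(x^2,xy)$ has radical $(x)$ but is not primary), and powers of primary ideals need not be primary in general rings, so the parenthetical reason is not a proof. The claim is nevertheless true here, but it uses the monomial structure: writing $P = P_F$, the component $I_P$ is generated by monomials in the variables $x_i$, $i \in F$, alone, hence so is $(I_P)^k$; every associated prime of $(I_P)^k$ is of the form $(I_P)^k : m$ for a monomial $m$ and is therefore generated by variables indexed by $F$, while it also contains $\sqrt{(I_P)^k} = P_F$, forcing $\Ass(R/(I_P)^k) = \{P_F\}$. (Equivalently: inside $K[x_i \mid i \in F]$ the ideal $(I_P)^k$ has radical the maximal graded ideal, hence is primary, and this persists under the flat polynomial extension to $R$.) With that short argument inserted, your proof is complete and in effect reproves the cited lemma of \cite{HHT}; otherwise it coincides with the paper's proof.
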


\begin{proof}
By \cite[Lemma 3.1]{HHT}, we have
$$I^{(k)} = \bigcap_{P \in \Min(I)} I_P^k.$$
Therefore, $x^\a \in I^{(k)}$ if and only if $x^\a \in I_P^k$ for all $P \in \Min(I)$.
By Proposition \ref{ordinary}(i), this condition means that $\nu_\a(I_P) \ge k$ for all $P \in \Min(I)$.
\end{proof}

If $I$ is a squarefree monomial ideal then we have a simpler criterion for $x^\a \in I^{(k)}$.
Before stating this criterion, we shall recall some basic fact from hypergraph theory.

Recall that a \emph{hypergraph} $\H$ consists of a vertex set and a collection of nonempty subsets of the vertex set.
These subsets are called edges (or hyperedges) of $\H$. Graphs are hypergraphs whose edges have size 2.
A hypergraph is \emph{simple} (or a \emph{clutter}) if there are no nontrivial inclusion among the edges.

Unless otherwise specified, \emph{we shall always assume that $\H$ is a simple hypergraph on the vertex set $[1,n] = \{1, \dots, n\}$}. \par

For every subset $F \subseteq [1,n]$ we denote by $\e_F$ the {\em incidence vector} of $F$,
whose $i$-th coordinate equals 1 if $i \in F$ and 0 if $i \not\in F$.
To every hypergraph $\H$ one can assign a squarefree monomial ideal
which is generated by the monomials $x^{\e_F}$, where $F$ is an edge in $\H$.
This ideal is called the {\em edge ideal} of $\H$, and denoted by $I(\H)$.
It is clear that every squarefree monomial ideal can be viewed as the edge ideal of a hypergraph. \par

A subset $F \subseteq [1,n]$ is called a (vertex) {\em cover} or {\em blocking set} of $\H$ if $F$ meets every edge of $\H$.
We denote by $\H^\vee$ the hypergraph whose edges are minimal vertex covers of $\H$. This is also a simple hypergraph, called the {\em blocker} of $\H$. Note that $(\H^\vee)^\vee = \H$.

\begin{lemma} \label{blocker}
Let $I$ be the edge ideal of a hypergraph $\H$. Then
$$\tau_\a(I) = \min\{\a \cdot \e_F|\ F \in \H^\vee\}.$$
\end{lemma}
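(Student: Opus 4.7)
The plan is to unwind the definition of $\tau_\a(I)$ and show that the infimum in the integer program is attained by the indicator vector of a minimal vertex cover of $\H$.

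First, I would translate the constraints. Since $I = I(\H)$, the columns of the exponent matrix $M$ are precisely the incidence vectors $\e_F$ for edges $F \in \H$, so the condition $M^T\z \ge \1^m$ becomes $\e_F \cdot \z = \sum_{i\in F} z_i \ge 1$ for every edge $F$ of $\H$. Thus a feasible $\z \in \NN^n$ is nothing but a non-negative integer vector whose support meets every edge of $\H$, i.e.\ whose support is a vertex cover of $\H$.

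Next, I would perform two reductions. Given any feasible $\z$, replacing it by the 0/1 indicator vector $\e_{\supp(\z)}$ still satisfies all constraints (sums $\sum_{i\in F} z_i$ only decrease, but each was $\ge 1$ and the support is unchanged in each edge), and since $\a \in \NN^n$ has non-negative entries, $\a \cdot \e_{\supp(\z)} \le \a \cdot \z$. So it suffices to minimize $\a \cdot \e_C$ over vertex covers $C$ of $\H$. Second, if $C$ is a vertex cover that is not minimal, pick any minimal vertex cover $C' \subseteq C$; again by non-negativity of $\a$, $\a \cdot \e_{C'} \le \a \cdot \e_C$. Hence the minimum is attained by the indicator of some minimal vertex cover, i.e.\ by $\e_F$ with $F \in \H^\vee$. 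Combining the two directions yields $\tau_\a(I) = \min\{\a \cdot \e_F \mid F \in \H^\vee\}$.

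There is no real obstacle here; the argument is essentially a routine relaxation-from-integers-to-indicators, facilitated by the fact that the constraint matrix has 0/1 entries and the coefficients of the objective are non-negative. The only point deserving a sentence of care is that $\z \ge \e_{\supp(\z)}$ coordinatewise, so the objective value drops (weakly) under the replacement, which is where non-negativity of $\a$ is used.
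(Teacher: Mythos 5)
Your proof is correct and follows essentially the same route as the paper: the paper simply asserts that an optimal solution of the integer program can be taken to be the 0--1 incidence vector of a minimal cover, and your two-step reduction (pass to $\e_{\supp(\z)}$, then shrink to a minimal cover, using non-negativity of $\a$) just makes that assertion explicit. No gaps.
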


\begin{proof} Let $x^{\a_1}, \dots, x^{\a_m}$ be the minimal monomial generators of $I$, and let $M$ be the exponent matrix of $I$.
Note that $F$ is a cover of $\H$ if and only $\a_i \cdot \e_F \ge 1$ for all $i = 1,\dots,m$.
This condition can be rewritten as $M^T \cdot \e_F \ge \1^m$.
It is clear that an optimal solution to the linear program of minimizing $\a \cdot \z$ subject to $M^T \cdot \z \ge \1^m$, $\z \in \NN^n$,
can be chosen to be a 0-1 vector $\z$ such that $\supp(\z)$ is of minimal size.
Therefore, such a vector $\z$ must be the incidence vector $\e_F$ of a minimal cover $F$ of $\H$.
It then follows that
$$\tau_\a(I) = \min\{\a \cdot \z ~|~ M^T \cdot \z \ge \1^m,\ \z \in \NN^n\} = \min\{\a \cdot \e_F ~|~ F \in \H^\vee\}.$$
\end{proof}

\begin{proposition} \label{symbolic} \cite[Lemma 3.5(3)]{T2}
Let $I$ be a squarefree monomial ideal. Then
$x^\a \in I^{(k)}$ if and only if $\tau_\a(I) \ge k$.
\end{proposition}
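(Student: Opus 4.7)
The plan is to reduce the squarefree case to the primary-decomposition criterion of Proposition \ref{symbolicpower} and then to Lemma \ref{blocker}. By Proposition \ref{symbolicpower}, we have $x^\a \in I^{(k)}$ if and only if $\nu_\a(I_P) \ge k$ for every $P \in \Min(I)$, so the task reduces to understanding the minimal primes and their primary components when $I$ is squarefree, and then computing $\nu_\a(I_P)$ explicitly.

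First I would identify $\Min(I)$. When $I = I(\H)$ is the edge ideal of the simple hypergraph $\H$, it is standard (Alexander-dual/combinatorial) that the minimal primes of $I$ are exactly the ideals $P_F = (x_i \mid i \in F)$ with $F \in \H^\vee$ a minimal vertex cover, and moreover $I = \bigcap_{F \in \H^\vee} P_F$. Since $I$ is radical, each $P_F$ is its own $P_F$-primary component, i.e.\ $I_{P_F} = P_F$. Thus the condition of Proposition \ref{symbolicpower} becomes
\begin{equation*}
x^\a \in I^{(k)} \iff \nu_\a(P_F) \ge k \ \text{ for all } F \in \H^\vee.
\end{equation*}

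Next I would compute $\nu_\a(P_F)$ directly from the definition. The minimal monomial generators of $P_F$ are the variables $x_i$ with $i \in F$, so the exponent matrix of $P_F$ has columns $\e_i$ for $i \in F$. The integer program defining $\nu_\a(P_F)$ then asks to maximize $\sum_{i \in F} y_i$ over $\y \in \NN^{|F|}$ subject to $y_i \le \alpha_i$ for each $i \in F$. The optimal solution is $y_i = \alpha_i$, giving
\begin{equation*}
\nu_\a(P_F) \;=\; \sum_{i \in F} \alpha_i \;=\; \a \cdot \e_F.
\end{equation*}

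Finally I would combine these observations with Lemma \ref{blocker}. The conditions $\a \cdot \e_F \ge k$ for all $F \in \H^\vee$ are equivalent to $\min_{F \in \H^\vee}\{\a \cdot \e_F\} \ge k$, and by Lemma \ref{blocker} this minimum is exactly $\tau_\a(I)$. Thus $x^\a \in I^{(k)}$ if and only if $\tau_\a(I) \ge k$, as desired. There is no real obstacle here: the whole argument is a bookkeeping assembly of Proposition \ref{symbolicpower}, the squarefree primary decomposition, and Lemma \ref{blocker}; the only point requiring minor care is justifying $I_{P_F} = P_F$, which holds because $I$ is radical and so coincides with the intersection of its minimal primes.
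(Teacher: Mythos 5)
Your proposal is correct and follows essentially the same route as the paper: identify the minimal primes of $I$ with the minimal covers $F \in \H^\vee$, reduce membership in $I^{(k)}$ to membership in each $P_F^k$ (equivalently $\a \cdot \e_F \ge k$), and conclude via Lemma \ref{blocker}. Your detour through Proposition \ref{symbolicpower} and the explicit integer-program computation $\nu_\a(P_F) = \a \cdot \e_F$ is just a slightly more formal way of stating the paper's direct observation that $x^\a \in P_F^k$ iff $\e_F \cdot \a \ge k$.
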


\begin{proof}
Since $I$ is a squarfree monomial ideals, we may consider $I$ as the edge ideal of a hypergraph $\H$.
It is easy to see that $P_F$ is a minimal prime of $I$ if and only if $F$ is a minimal cover of $\H$.
Therefore,  $I = \bigcap_{F \in \H^\vee} P_F.$ This implies that
$$I^{(k)} = \bigcap_{F \in \H^\vee} P_F^k.$$
Thus, $x^\a \in I^{(k)}$ if and only if $x^\a \in P_F^k$ for all $F \in \H^\vee$.
We have $x^\a \in P_F^k$ if and only if $\e_F \cdot \a \ge k$.
Hence, $x^\a \in I^{(k)}$ if and only if $\min\{\e_F \cdot \a ~|\ F \in \H^\vee\} \ge k.$
The conclusion follows by applying Lemma \ref{blocker}.
\end{proof}


\section{Matching and covering numbers of hypergraphs} \label{sec.matchingcovering}

Let $\H$ be a hypergraph. A family of disjoint edges is called a {\em matching} of $\H$.
The minimal size of a maximal matching of $\H$ is called the {\em matching number} of $\H$, denoted by $\nu(\H)$.
The maximal size of a cover of $\H$ is called the {\em covering number} of $\H$, denoted by $\tau(\H)$.

Let $M$ be the {\em incidence matrix} of $\H$ whose columns are the incidence vectors of the edges of $\H$.
It is well-known that
\begin{align*}
\nu(\H) & = \max\{\y \cdot \1^m ~|~  M \cdot \y  \le \1^n, \y \in \NN^m\},\\
\tau(\H) & = \min\{\z \cdot \1^n ~|~  M^T \cdot \z  \ge \1^m, \z \in \NN^n\}.
\end{align*}
The following numbers are called the {\em fractional matching number} or the {\em fractional covering number} of $\H$:
\begin{align*}
\nu^*(\H) & := \max\{\1^m \cdot \y ~|~  M \cdot \y  \le \1^n, \y \in \RR_{\ge 0}^m\},\\
\tau^*(\H) & := \min\{\1^n \cdot \z ~|~  M^T \cdot \z  \ge \1^m, \z \in \RR_{\ge 0}^n\}.
\end{align*}

In this section, we shall see that if $I$ is the edge ideal of a hypergraph $\H$ then the invariants $\nu_\a(I), \tau_\a(I)$, $\nu_\a^*(I)$, and $\tau_\a^*(I)$ can be viewed as the matching number, the covering number, and their fractional versions of a hypergraph associated to $\H$ and $\a$.
Specifically, let $\H^\a$ denote the hypergraph on the vertex set
$$V = \{(i,j)|\ i = 1,\dots,n,\ j = 1,\dots,\alpha_i \},$$
 whose edges are subsets of $V$ of the form $\{(i_1,j_1),\dots,(i_s,j_s)\}$,
where $\{i_1,\dots,i_s\}$ is an edge of $\H$ and $j_1 = 1,\dots,\alpha_{i_1},\dots, j_s = 1,\dots,\alpha_{i_s}$.
The hypergraph $\H^\a$ is called the {\em parallelization} of $\H$ with respect to $\a$.
Note that $\H = \H^\a$ if $\a = \1^n$.

\begin{example} 
Figure \ref{fig1} depicts a hypergraph $\H$ and its parallelization $\H^\a$ with $\a = (1,1,2,2)$.

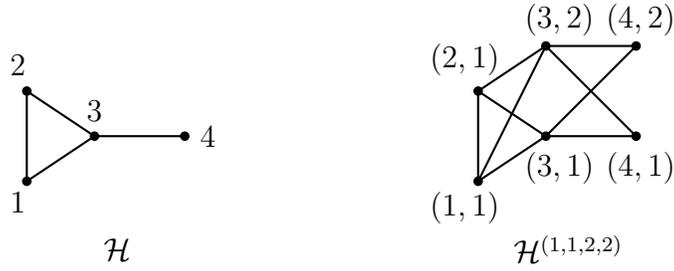
\begin{figure}[ht!] 
\begin{tikzpicture}[scale=0.6]

\draw [thick] (0,0) coordinate (a) -- (0,2) coordinate (b) ;
\draw [thick] (0,2) coordinate (b) -- (1.5,1) coordinate (c) ;
\draw [thick] (1.5,1) coordinate (c) -- (0,0) coordinate (a) ;
\draw [thick] (1.5,1) coordinate (c) -- (3.5,1) coordinate (d);

\draw (2,-1.5) node{$\H$};

\draw (3.6,1) node[right] {$4$};
\draw (-0.2,0) node[left, below] {$1$};
\draw (-0.2,2.1) node[left, above] {$2$};
\draw (1.5,1.1) node[above] {$3$};
\fill (a) circle (3pt);
  \fill (b) circle (3pt);
  \fill (c) circle (3pt);
  \fill (d) circle (3pt);

\draw [thick] (10,0) coordinate (a) -- (10,2) coordinate (b) ;
\draw [thick] (10,2) coordinate (b) -- (11.5,1) coordinate (c) ;
\draw [thick] (11.5,1) coordinate (c) -- (10,0) coordinate (a) ;
\draw [thick] (11.5,1) coordinate (c) -- (13.5,1) coordinate (d);
\draw [thick] (10,2) coordinate (b) -- (11.5,3) coordinate (e) ;
\draw [thick] (10,0) coordinate (a) -- (11.5,3) coordinate (e) ;
\draw [thick] (11.5,3) coordinate (e) -- (13.5,1) coordinate (d);
\draw [thick] (11.5,3) coordinate (e) -- (13.5,3) coordinate (f);
\draw [thick] (11.5,1) coordinate (c) -- (13.5,3) coordinate (f);

\draw (12,-1.5) node{$\H^{(1,1,2,2)}$};

\draw (13.6,0.9) node[right, below] {$(4,1)$};
\draw (9.7,0) node[left, below] {$(1,1)$};
\draw (9.7,2.1) node[left, above] {$(2,1)$};
\draw (11.8,0.9) node[right,below] {$(3,1)$};
\draw (11.8,3) node[right,above] {$(3,2)$};
\draw (13.6,3) node[right,above] {$(4,2)$};
\fill (a) circle (3pt);
  \fill (b) circle (3pt);
  \fill (c) circle (3pt);
  \fill (d) circle (3pt);
  \fill (e) circle (3pt);
   \fill (f) circle (3pt);

\end{tikzpicture}
\caption{Parallelization of a hypergraph.}\label{fig1}
\end{figure}
\end{example}

For every set $E \subseteq V$ we define $p(E) :=  \{i|\ \text{there is $j$ such that $(i,j) \in E$}\}$.
In other words, $p$ is the projection to the first component of the elements of $V$. 
Let $A = \supp(\a)$.
Then $p$ gives a map from $\H^\a$ to $\H_A$,
where $\H_A$ denotes the hypergraph on the vertex set $A$ which consists of edges $F \subseteq A$ of $\H$. \par

The maximal matchings and minimal covers of $\H^\a$ can be described in terms of $\H$ and $\a$ as follows.

\begin{lemma} \label{parallel-1}
Let $\H^\a$ be a parallelization of a hypergraph $\H$. Then \par
{\rm (i)} A family of disjoint edges $E_1,\dots,E_s$ of $\H^\a$ is a maximal matching of $\H^\a$ if and only if $p(E_1),\dots,p(E_s)$ is maximal among sequences $\F$ of (not necessarily distinct) edges of $\H$ with the property $|\{F \in \F|\ i \in F\}| \le a_i$ for all $i = 1,\dots,n$.\par
{\rm (ii)}  A set $C \subseteq V$ is a minimal cover of $\H^\a$ if and only if $C = p^{-1}(D)$ for a minimal cover $D$ of $\H_A$.
\end{lemma}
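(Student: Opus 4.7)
The plan is to base both parts on an explicit combinatorial description of the edges of $\H^\a$: each edge can be written uniquely as a pair $(F,\phi)$, where $F$ is an edge of $\H_A$ (equivalently, an edge of $\H$ contained in $A=\supp(\a)$) and $\phi\colon F \to \NN$ is a function with $\phi(i) \in [1,\alpha_i]$ for every $i \in F$. A family of such pairs $(F_1,\phi_1),\dots,(F_s,\phi_s)$ produces pairwise disjoint edges of $\H^\a$ if and only if, for each vertex $i \in [1,n]$, the values $\phi_l(i)$ over the indices $l$ with $i \in F_l$ are pairwise distinct. Since these values lie in $[1,\alpha_i]$, disjointness of the lifts is equivalent to the capacity inequality $|\{l \mid i \in F_l\}| \le \alpha_i$, and any sequence satisfying this inequality can be lifted by choosing, for each $i$, an arbitrary injection of $\{l \mid i \in F_l\}$ into $[1,\alpha_i]$.

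For (i), I would use this correspondence directly. The projection $E_l \mapsto p(E_l)$ sends a matching $E_1,\dots,E_s$ of $\H^\a$ to a sequence of edges of $\H$ satisfying the capacity constraint, and conversely any such sequence lifts to a matching of the same cardinality. Since the correspondence is size-preserving, the maxima on the two sides coincide, so a matching is of maximum size if and only if its projection is a sequence of maximum length.

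For (ii), the easy direction is immediate: if $D$ is a cover of $\H_A$, then $p^{-1}(D)$ meets every edge $E$ of $\H^\a$, because $p(E) \in \H_A$ must meet $D$. To see that $p^{-1}(D)$ is minimal whenever $D$ is, fix $(i,j_0) \in p^{-1}(D)$; since $D \setminus \{i\}$ fails to cover, I would pick $F \in \H_A$ with $F \cap D = \{i\}$ and lift it to an edge $E$ of $\H^\a$ by setting $\phi(i)=j_0$ and choosing $\phi(i')\in [1,\alpha_{i'}]$ arbitrarily for $i' \in F \setminus \{i\}$. Then $E \cap p^{-1}(D) = \{(i,j_0)\}$, so $(i,j_0)$ cannot be removed.

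The main obstacle will be the converse in (ii), namely that any minimal cover $C$ of $\H^\a$ equals $p^{-1}(D)$ for $D := p(C)$. One first checks that $D \subseteq A$ is a cover of $\H_A$ (lift any $F \in \H_A$ to an edge met by $C$) and that $C \subseteq p^{-1}(D)$. For the reverse inclusion, suppose $(i,j) \in p^{-1}(D) \setminus C$. Since $i \in D = p(C)$, there is some $(i,j') \in C$, and minimality of $C$ furnishes an edge $E$ of $\H^\a$ with $C \cap E = \{(i,j')\}$. The decisive step is a \emph{swap argument}: the edge $E' := (E \setminus \{(i,j')\}) \cup \{(i,j)\}$ has the same projection as $E$, hence is again an edge of $\H^\a$, and yet $C \cap E' = \emptyset$, contradicting that $C$ covers $E'$. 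Once $C = p^{-1}(D)$ is established, minimality of $D$ follows immediately, because any proper sub-cover $D' \subsetneq D$ of $\H_A$ would yield a proper sub-cover $p^{-1}(D') \subsetneq C$ of $\H^\a$.
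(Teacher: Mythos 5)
Your argument is correct and essentially the paper's own: part (i) rests on the same projection--lifting correspondence (a sequence of edges of $\H$ satisfying the capacity constraint lifts to pairwise disjoint edges of $\H^\a$ by choosing injections into the $\alpha_i$ slots over each vertex $i$), and part (ii) on the same swap argument, replacing $(i,j')$ by $(i,j)$ inside an edge that $C$ meets only in $(i,j')$, together with the identical deduction that minimality of $D$ follows once $C = p^{-1}(D)$ is known. The only cosmetic difference is that in (i) you read ``maximal'' as maximum cardinality, whereas the paper's proof treats it as non-extendability; your lifting correspondence yields that reading verbatim (an edge extending the projected constrained sequence lifts to an edge of $\H^\a$ disjoint from $E_1,\dots,E_s$), so nothing is lost.
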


\begin{proof}
(i) If $E_1,\dots,E_s$ is not a maximal matching, then there is a larger matching $E_1,\dots,E_{s+1}$.
Let $\F$ be the family $p(E_1),\dots,p(E_{s+1})$.
Since $E_1,\dots,E_{s+1}$ are disjoint, we have
$|\{F \in \F|\ i \in F\}| \le |p^{-1}(i)| = a_i$
for all $i = 1,\dots,n$. \par

Conversely, if $p(E_1),\dots,p(E_s)$ is not maximal among sequences $\F$ of not necessarily distinct edges of $\H$
with the property $|\{F \in \F|\ i \in F\}| \le a_i$ for all $i = 1,\dots,n$, 
we put $F_j = p(E_j)$, $j = 1,\dots,s$.
Then there exists an edge $F_{s+1}$ of $\H$ such that the family $\F = \{F_1,\dots,F_{s+1}\}$ satisfies the property
$|\{F \in \F|\ i \in F\}| \le a_i$.
By the definition of $\H^\a$, we can find an edge $E_{s+1} \in p^{-1}(F_{s+1})$ disjoint from the edges $E_1,\dots,E_s$.
Hence, $E_1,\dots,E_s$ is not a maximal matching. \par

(ii) Let $C$ be a minimal cover of $\H^\a$ and let $D = p(C)$. Let $i \in D$ and let $(i,j)$ be any vertex in $C$. Since $C \setminus\{(i,j)\}$ is not a cover of $\H^\a$, there exists an edge $E \in \H^\a$ such that $E \cap C = \{(i,j)\}$. By considering edges in $\H^\a$ obtained from $E$ by replacing $(i,j)$ with $(i,j')$, for $1 \le j' \not= j \le a_i$, it follows that $C$ contains all vertices $(i,j')$ for $1 \le j' \le a_i$. Thus, $C = p^{-1}(D)$.
Since $C$ is a cover of $\H^\a$, $D = p(C)$ is a cover of $\H_A = p(\H^\a)$. 
If there exists $i \in D$ such that $D \setminus \{i\}$ is also a cover of $\H_A$, 
then $p^{-1}(D \setminus \{i\})$ is a cover of $\H^\a$ that is strictly contained in $C$, a contradiction. 
Thus, $D$ is a minimal cover of $\H_A$.

Conversely, let $C = p^{-1}(D)$ for a minimal cover $D$ of $\H_A$. Consider any edge $E \in \H^\a$. Then $p(E)$ is an edge in $\H_A$, and so $p(E) \cap D \not= \emptyset$. This implies that $E \cap C \not= \emptyset$. Thus, $C$ is a cover of $\H^\a$.
Suppose that $C$ is not a minimal cover of $\H^\a$. That is, there exists $i \in D$ and $1 \le j \le a_i$ (particularly, we have $a_i \ge 1$) such that $C \setminus \{(i,j)\}$ is a cover of $\H^\a$. Since $D$ is a minimal cover of $\H_A$, there exists an edge $F \in \H_A$ such that $F \cap D = \{i\}$. Let $E$ be an edge in $\H^\a$ with $p(E) = F$ and $(i,j) \in E$. Then, clearly, $E \cap C = \emptyset$, a contradiction. Hence, $C$ is a minimal cover of $\H^\a$.
\end{proof}

\begin{proposition} \label{parallel-2}
Let $I$ be the edge ideal of a hypergraph $\H$. Then
\begin{enumerate}
\item[\rm (i)] $\nu_\a(I) = \nu(\H^\a)$,
\item[\rm (ii)] $\nu_\a^*(I) = \nu^*(\H^\a)$,
\item[\rm (iii)] $\tau_\a^*(I) = \tau^*(\H^\a)$,
\item[\rm (iv)] $\tau_\a(I) = \tau(\H^\a)$.
\end{enumerate}
\end{proposition}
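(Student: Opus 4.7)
My plan is to prove the four equalities in sequence, obtaining the integer statements (i) and (iv) as essentially immediate consequences of Lemma \ref{parallel-1}, and handling the fractional statements (ii) and (iii) by explicit translation of feasible solutions.

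For (i), a vector $\y = (y_F)_{F \in E(\H)} \in \NN^m$ with $M\y \le \a$ is exactly a multi-family of edges of $\H$ (each $F$ occurring $y_F$ times) satisfying $|\{F : i \in F\}| \le a_i$ for every $i$, whose objective $\1^m \cdot \y$ equals the total number of edges counted with multiplicity. Lemma \ref{parallel-1}(i) then provides, via the projection $p$, a size-preserving bijection between maximal such multi-families and maximal matchings of $\H^\a$, so the two maxima agree. For (iv), Lemma \ref{blocker} gives $\tau_\a(I) = \min\{\a \cdot \e_F : F \in \H^\vee\}$, while Lemma \ref{parallel-1}(ii) gives $\tau(\H^\a) = \min\{|p^{-1}(D)| : D \text{ a minimal cover of } \H_A\} = \min\{\a \cdot \e_D : D \text{ a minimal cover of } \H_A\}$. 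With $A := \supp(\a)$, vertices outside $A$ have zero $\a$-weight, so each minimal cover of $\H$ restricts to a cover of $\H_A$ of no greater $\a$-weight, and each minimal cover of $\H_A$ extends by zero-weight vertices to a cover of $\H$ of the same weight; hence the two minima coincide.

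For (ii), I translate fractional feasible solutions in both directions. Given $\y \in \RR_{\ge 0}^m$ with $M\y \le \a$ (note that $y_F > 0$ forces $F \subseteq A$, so the denominators below are nonzero), set $y'_E := y_{p(E)}/\prod_{i \in p(E)} a_i$ for every $E \in E(\H^\a)$. Each $F \subseteq A$ has exactly $\prod_{i \in F} a_i$ preimages in $E(\H^\a)$, so $\sum_E y'_E = \sum_F y_F$. For each vertex $(i,j) \in V$,
$$\sum_{E \ni (i,j)} y'_E = \sum_{F \ni i} y_F \cdot \frac{\prod_{i' \in F \setminus \{i\}} a_{i'}}{\prod_{i' \in F} a_{i'}} = \frac{1}{a_i}\sum_{F \ni i} y_F \le 1,$$
so $\y'$ is feasible for $\nu^*(\H^\a)$ with the same objective. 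Conversely, given $\y'$ feasible for $\nu^*(\H^\a)$, the vector $y_F := \sum_{p(E) = F} y'_E$ is feasible for $\nu_\a^*(I)$: since each edge of $\H^\a$ meets the fiber $p^{-1}(i)$ in at most one vertex, $\sum_{F \ni i} y_F = \sum_{j=1}^{a_i}\sum_{E \ni (i,j)} y'_E \le a_i$, with objectives coinciding.

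Part (iii) then follows immediately from the LP duality identities $\nu_\a^*(I) = \tau_\a^*(I)$ (stated in the introduction) and $\nu^*(\H^\a) = \tau^*(\H^\a)$, combined with (ii); a mirror direct construction also works, extending $\z \in \RR_{\ge 0}^n$ by $z'_{(i,j)} := z_i$ in one direction and contracting $\z'$ by $z_i := \min_j z'_{(i,j)}$ in the other, using the inequality $a_i z_i \le \sum_j z'_{(i,j)}$ to preserve the objective. The main obstacle throughout is purely bookkeeping — correctly handling the case where $\a$ has zero entries (so that $\H^\a$ effectively restricts to $\H_A$) and tracking the multiplicative scaling factors $\prod a_i$ in the fractional translation — but no conceptual difficulty arises beyond Lemma \ref{parallel-1}.
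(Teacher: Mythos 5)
Your proposal is correct, and parts (i), (iii) and (iv) run essentially along the paper's lines: (i) is the same translation of multi-families of edges into integer vectors via Lemma \ref{parallel-1}(i) (though ``size-preserving bijection'' is a slight overstatement --- distinct maximum matchings can project to the same family; all one needs, and all the lemma gives, is equality of the two maxima), (iv) is the same combination of Lemma \ref{blocker} with Lemma \ref{parallel-1}(ii) plus the observation that covers of $\H$ and of $\H_A$ have the same $\a$-weight up to zero-weight vertices, and (iii) is the same appeal to LP duality. The genuine difference is in (ii): the paper avoids any fractional bookkeeping by invoking Remark \ref{nu*}, writing $\nu^*(\H^\a)=\max_{q\ge 1}\nu_{q\1^s}(I_\a)/q$ and identifying $\nu_{q\1^s}(I_\a)=\nu_{q\a}(I)$ by the integer argument of part (i), whereas you translate fractional feasible solutions directly --- spreading $y_F$ uniformly over the $\prod_{i\in F}a_i$ fibre edges in one direction and aggregating over fibres in the other. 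Both are valid; your construction is self-contained at the LP level and does not need Remark \ref{nu*} or the rationality/attainment facts behind it, while the paper's route recycles the already-proved integer statement and sidesteps the scaling computations you must verify. One small caveat on your parenthetical ``mirror'' argument for (iii): the contraction $z_i:=\min_j z'_{(i,j)}$ is undefined for $i\notin A$, where you should simply set $z_i$ to any value $\ge 1$ (it costs nothing since $a_i=0$) so that edges of $\H$ not contained in $A$ remain fractionally covered; your main derivation of (iii) from (ii) by duality is unaffected.
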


\begin{proof}
(i) Assume that the edges in $\H$ are $\{F_1,\dots,F_m\}$. We may represent any sequence $\F$ of not necessarily distinct edges of $\H$ as a vector
$\y = (\beta_1,\dots,\beta_m) \in \NN^m$ such that for $j = 1,\dots,m$, $\beta_j$ is the number of times $F_j$ appears in $\F$.
Let $M$ be the exponent matrix of $I$. Then $M$ is an $n \times m$ matrix whose columns are the incidence vectors of $F_1,\dots,F_m$.
Thus, it can be seen that $|\{F \in \F|\ i \in F\}| \le a_i$ for all $i = 1,\dots,n$ if and only if $M \cdot \y \le \a$.
By Lemma \ref{parallel-1}(i), we have
$$\nu(\H^\a) = \max\{\1^m \cdot \y|\ \y \in \NN^m, M \cdot \y \le \a\} = \nu_\a(I).$$ \par

(ii) Let $I_\a$ denote the edge ideal of $\H^\a$. Then $\nu^*(\H^\a) = \nu_{\1^s}^*(I_\a)$, where $s$ is the number of vertices of $\H^\a$. For every integer $q \ge 1$, we can interpret $\nu_{q\1^s}(I_\a)$ as the maximal size of a family $\E$ of not necessarily distinct edges of $\H^\a$ such that every vertex of $V$ appears at most $q$ times in the edges of $\E$. It follows, by a similar argument to that of part (i), that
$$\nu_{q\1^s}(I_\a) = \max\{\1^m \cdot \y|\ \y \in \NN^m, M \cdot \y \le q\a\} = \nu_{q\a}(I).$$
By Remark \ref{nu*}, we have
$$\nu^*(\H^\a) = \max_{q\ge 1} \frac{\nu_{q\1^s}(I_\a)}{q} = \max_{q\ge 1} \frac{\nu_{q\a}(I)}{q} = \nu_\a^*(I).$$
\par

(iii) follows from (i) because $\nu_\a^*(I) = \tau_\a^*(\H^\a)$ and $\nu^*(\H^\a) = \tau^*(\H^\a)$ by the duality of linear programming.\par

(iv) Let $s$ be the number of vertices of $\H^\a$. Using Lemma \ref{blocker}, we have
$$\tau(\H^\a) = \min\{\1^s \cdot \e_E|\ E \in (\H^\a)^\vee\}.$$
By Lemma \ref{parallel-1}(ii), $(\H^\a)^\vee = \{p^{-1}(F)|\ F \in (\H_A)^\vee\}$.
If $E = p^{-1}(F)$ then we have $\1^s \cdot \e_E  = \a \cdot \e_F$.
Therefore,
$$\tau(\H^\a) = \min\{\a \cdot \e_F|\ F \in (\H_A)^\vee\}.$$
By Lemma  \ref{blocker}, we also have
$$\tau_\a(I) = \min\{\a \cdot \e_G|\ G \in \H^\vee\}.$$

For every minimal cover $F$ of $\H_A$,
we consider the hypergraph $\H'$ of the edges of $\H$ not meeting $F$.
Since $[1,n] \setminus A$ is a cover of $\H'$,
there is a minimal cover $F'$ of $\H'$ in $[1,n] \setminus A$.
It is easy to check that $G = F \cup F'$ is a minimal cover of $\H$ with $G \cap A = F$.
Since $\a \cdot \e_F = \a \cdot \e_G$, we get $\tau(\H^\a) \ge \tau_\a(I).$

On the other hand, for every minimal cover $G$ of $\H$,
$F = G \cap A$ is a minimal cover of $\H_A$ and $\a \cdot \e_G = \a \cdot \e_F$.
Therefore, $\tau_\a(I) \ge \tau(\H^\a).$
Hence, $\tau(\H^\a) =  \tau_\a(I).$
\end{proof}


\section{From gap estimates to containments between of ideals} \label{sec.gap2containment}

In general, we have the following correspondence between containments of monomial ideals and bounds on invariants for  membership criteria. This correspondence applies directly to the containments between powers, integral powers, and symbolic powers of a monomial ideal.

\begin{lemma} \label{lem.equiv}
Let $\{I_k\}_{k \ge 1}$ and $\{J_k\}_{k \ge 1}$ be two filtrations of monomial ideals in $R$.
Suppose that there are functions $\mu$ and $\rho$ from $\NN^n$ to $\RR_+$ such that, for any $\a \in \NN^n$ and $k \ge 1$,
\begin{itemize}
\item $x^\a \in I_k$ if and only if $\mu(\a) \ge k$;
\item $x^\a \in J_k$ if and only if $\rho(\a) \ge k$.
\end{itemize}
\noindent Let $f: \NN \longrightarrow \RR_+$ be a non-decreasing function. Then
\begin{enumerate}
\item[{\rm (i)}] $I_k \subseteq J_{\lfloor f(k)\rfloor}$ for all $k \ge 1$ if and only if
$\rho(\a) \ge \lfloor f(\lfloor\mu(\a)\rfloor)\rfloor $ for all $\a \in \NN^n$;
\item[{\rm (ii)}] $I_{\lceil f(k)\rceil} \subseteq J_k$ for all $k \ge 1$ if and only if
$\mu(\a) < \lceil f(\lfloor\rho(\a)\rfloor+1)\rceil$ for all $\a \in \NN^n$.
\end{enumerate}
\end{lemma}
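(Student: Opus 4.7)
The plan is to convert each ideal containment into a pointwise statement about the membership functions $\mu$ and $\rho$, and then to exploit the monotonicity of $f$ to pick out the single optimal value of $k$ that carries all the information. Because $I_k$ and $J_k$ are monomial ideals, testing a containment on monomials is lossless, so each of the two containments unravels via the given membership criteria into a universally quantified implication.

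For part (i), the containment $I_k \subseteq J_{\lfloor f(k)\rfloor}$ for every $k \ge 1$ is equivalent to the implication
$$\mu(\a) \ge k \quad \Longrightarrow \quad \rho(\a) \ge \lfloor f(k)\rfloor \qquad (\a \in \NN^n,\ k \ge 1).$$
For a fixed $\a$ the set of admissible $k$ is $\{1,\dots,\lfloor \mu(\a)\rfloor\}$, and since $f$ is non-decreasing the strongest resulting bound on $\rho(\a)$ is produced by the largest such $k$, namely $k = \lfloor \mu(\a)\rfloor$. This collapses the family of implications to the single inequality $\rho(\a) \ge \lfloor f(\lfloor \mu(\a)\rfloor)\rfloor$, and the converse direction is immediate: assuming that inequality, any $1 \le k \le \lfloor \mu(\a)\rfloor$ gives $\lfloor f(k)\rfloor \le \lfloor f(\lfloor \mu(\a)\rfloor)\rfloor \le \rho(\a)$ by monotonicity of $f$.

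For part (ii), I would translate $I_{\lceil f(k)\rceil} \subseteq J_k$ as the implication $\mu(\a) \ge \lceil f(k)\rceil \Rightarrow \rho(\a) \ge k$ and pass to its contrapositive
$$\rho(\a) < k \quad \Longrightarrow \quad \mu(\a) < \lceil f(k)\rceil.$$
For fixed $\a$ the premise $\rho(\a) < k$ with $k \in \NN_{\ge 1}$ is equivalent to $k \ge \lfloor \rho(\a)\rfloor + 1$, and monotonicity of $f$ makes the smallest such $k$ yield the tightest upper bound on $\mu(\a)$, giving the single condition $\mu(\a) < \lceil f(\lfloor \rho(\a)\rfloor + 1)\rceil$. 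The converse is symmetric: for any $k \ge 1$ with $\rho(\a) < k$ one has $\lfloor \rho(\a)\rfloor + 1 \le k$, so $\mu(\a) < \lceil f(\lfloor \rho(\a)\rfloor + 1)\rceil \le \lceil f(k)\rceil$.

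The argument is thus essentially formal, and the main obstacle is purely bookkeeping: matching floors and ceilings to the correct strict or non-strict inequalities, and handling the corner case $\mu(\a) < 1$ in part (i), where the hypothesis is vacuous for every $k \ge 1$ so that the stated bound $\rho(\a) \ge \lfloor f(0)\rfloor$ must be read under the convention $f(0) < 1$ (which is automatic in every application of the lemma in this paper).
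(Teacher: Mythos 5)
Your proof is correct and follows essentially the same route as the paper's: both translate the containments into the membership inequalities for $\mu$ and $\rho$ and exploit the monotonicity of $f$ at the extremal index ($k=\lfloor\mu(\a)\rfloor$ for (i), $k=\lfloor\rho(\a)\rfloor+1$ for (ii)), your contrapositive/quantifier-collapse phrasing being just a reorganization of the paper's two direct implications. Your remark about the corner case $\mu(\a)<1$ in (i) concerns a point the paper's own proof also passes over silently (it sets $k=\lfloor\mu(\a)\rfloor$, which may be $0$), so flagging the needed convention there is a fair observation rather than a gap in your argument.
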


\begin{proof}
(i) Assume that $\rho(\a) \ge \lfloor f(\lfloor \mu(\a)\rfloor)\rfloor$ for all $\a \in \NN^n$.
For an arbitrary monomial $x^\a \in I_k$, we have $\lfloor \mu(\a) \rfloor \ge k$.
Hence, $f(\lfloor \mu(\a)\rfloor) \ge  f(k)$, which implies that $\rho(\a) \ge \lfloor f(k)\rfloor$.
Therefore, $x^\a \in J_{\lfloor f(k)\rfloor}$. Conversely, assume that $I_k \subseteq J_{\lfloor f(k)\rfloor}$ for all $k \ge 1$.
Consider an arbitrary $\a \in \NN^n$, and set $k = \lfloor \mu(\a)\rfloor $.
Then $x^\a \in I_k$. Hence, $x^\a \in J_{\lfloor f(k)\rfloor}$, which implies that $\rho(\a) \ge \lfloor f(\lfloor \mu(\a)\rfloor)\rfloor$.

(ii) Assume that $\mu(\a) < \lceil f(\lfloor \rho(\a)\rfloor +1)\rceil$ for all $\a \in \NN^n$. For
an arbitrary monomial $x^\a \in I_{\lceil f(k)\rceil}$, we have $\mu(\a) \ge \lceil f(k) \rceil$.
Thus, $\lceil f(\lfloor \rho(\a)\rfloor+1) \rceil > \lceil f(k) \rceil$. This implies that $f(\lfloor\rho(\a)\rfloor+1)  >  f(k)$.
Therefore, $\lfloor \rho(\a)\rfloor + 1 > k$. Hence, $\rho(\a) \ge k$ and $x^\a \in J_k$.
Conversely, assume that $I_{\lceil f(k)\rceil} \subseteq J_k$ for all $k \ge 1$. If there exists $\a \in \NN^n$ such that $\mu(\a) \ge \lceil f(\lfloor \rho(\a)\rfloor+1) \rceil$
then $x^\a \in I_{\lceil f(\lfloor \rho(\a)\rfloor+1)\rceil}$ and, since $\lfloor \rho(\a)\rfloor+1 > \rho(\a)$, $x^\a \not\in J_{\lfloor \rho(\a)\rfloor +1}$. This implies that $I_{\lceil f(k)\rceil} \not\subseteq J_k$ with $k = \lfloor \rho(\a)\rfloor+1$, a contradiction.
\end{proof}

In practice, Lemma \ref{lem.equiv}(ii) often is less applicable than the following weaker version, especially when only one direction of the implication is of interest.

\begin{corollary} \label{thm.SymbInt}
Let $\{I_k\}_{k \ge 1}$, $\{J_k\}_{k \ge 1}$, $\mu$, and $\rho$ be as in Lemma \ref{lem.equiv}.
Let $f: \NN \longrightarrow \RR_+$ be a strictly increasing function.
Then $I_{\lceil f(k)\rceil} \subseteq J_k$ for all $k \ge 1$ if $\mu(\a) \le  f(\rho(\a))$ for all $\a \in \NN^n$.
\end{corollary}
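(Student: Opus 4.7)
The plan is a short chase through the two membership criteria, linked by the hypothesis and by the strict monotonicity of $f$. Fix $k \ge 1$ and take an arbitrary monomial $x^\a \in I_{\lceil f(k)\rceil}$. First I would apply the $\mu$-characterization of $\{I_k\}_{k \ge 1}$ to conclude that $\mu(\a) \ge \lceil f(k)\rceil$, and therefore $\mu(\a) \ge f(k)$. Next, I would invoke the standing hypothesis $\mu(\a) \le f(\rho(\a))$ to obtain
$$f(\rho(\a)) \;\ge\; \mu(\a) \;\ge\; f(k).$$
Because $f$ is strictly increasing, and hence order-reflecting on its domain, this forces $\rho(\a) \ge k$. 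Finally, the $\rho$-characterization of $\{J_k\}_{k \ge 1}$ yields $x^\a \in J_k$, completing the containment $I_{\lceil f(k)\rceil} \subseteq J_k$.

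The argument is essentially a streamlined one-direction version of Lemma \ref{lem.equiv}(ii), trading the integer bookkeeping with $\lfloor \rho(\a)\rfloor + 1$ for the cleaner hypothesis made available by the strictness of $f$. The crucial conceptual point, which I would highlight, is that the strictness of $f$ is exactly what absorbs the slack introduced when replacing the real number $f(k)$ by the integer $\lceil f(k)\rceil$ on the left-hand side: without strictness, $f(\rho(\a)) \ge f(k)$ would only give $\rho(\a) \ge k$ up to a tie, which could be lost after rounding.

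The only point that requires care is the interpretation of $f(\rho(\a))$ when $\rho$ takes non-integer values (as happens in the intended applications with $\rho = \nu_\a^*$ or $\tau_\a^*$). I would handle this by tacitly regarding $f$ as extended to a strictly increasing function on $\RR_{\ge 0}$; since only the order-reflecting property of $f$ is used, any such monotone extension suffices and the conclusion is unaffected. Beyond this notational subtlety, the proof requires no new ingredients.
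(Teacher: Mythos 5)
Your argument is correct, but it takes a somewhat different route from the paper. The paper proves the corollary by reduction to Lemma \ref{lem.equiv}(ii): from $\mu(\a) \le f(\rho(\a))$, the inequality $\rho(\a) < \lfloor \rho(\a)\rfloor + 1$ and strict monotonicity give $\mu(\a) < f(\lfloor\rho(\a)\rfloor+1) \le \lceil f(\lfloor\rho(\a)\rfloor+1)\rceil$, which is exactly the hypothesis of that lemma. You instead bypass the lemma entirely and run the membership chase directly: $x^\a \in I_{\lceil f(k)\rceil}$ gives $\mu(\a) \ge \lceil f(k)\rceil \ge f(k)$, hence $f(\rho(\a)) \ge f(k)$, and order-reflection of the strictly increasing $f$ forces $\rho(\a) \ge k$, i.e.\ $x^\a \in J_k$. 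Both proofs hinge on the same use of strictness and on the same tacit extension of $f$ to non-integer values of $\rho(\a)$ (the paper needs this too, since it evaluates $f$ at $\rho(\a)$), which you rightly flag. What your version buys is a self-contained argument free of the floor-plus-one bookkeeping; what the paper's version buys is brevity, since Lemma \ref{lem.equiv}(ii) is already in place and is reused elsewhere. One small quibble: your closing remark that strictness ``absorbs the slack from replacing $f(k)$ by $\lceil f(k)\rceil$'' misplaces where strictness is actually needed --- the ceiling on the left only helps, and strictness is used precisely in the order-reflection step $f(\rho(\a)) \ge f(k) \Rightarrow \rho(\a) \ge k$ (with merely non-decreasing $f$ this implication fails); this is a matter of commentary, not a gap in the proof.
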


\begin{proof}
If $\mu(\a) \le  f(\rho(\a))$, then $\mu(\a) <  f(\lfloor \rho(\a)\rfloor+1)$ because $\rho(\a) < \lfloor \rho(\a)\rfloor+1$ and $f$ is strictly increasing.
Therefore, $\mu(\a) < \lceil f(\lfloor \rho(\a)\rfloor+1)\rceil$ and the conclusion follows from Lemma \ref{lem.equiv}.
\end{proof}

Let $\H$ be a hypergraph.
It follows from the definition of matching and covering numbers (see Section \ref{sec.matchingcovering}) and the duality in linear programming that
$$\nu(\H) \le \nu^*(\H) = \tau^*(\H) \le \tau(\H).$$
The gaps between these invariants has been a major research topic in hypergraph theory (cf. \cite{Du,Sch}).
Estimates for these gaps are often given as bounds for one invariant by a function of another.

If $I$ is the edge ideal of $\H$ then, in light of Proposition \ref{parallel-2}, applying such bounds to the parallelization $\H^\a$ of $\H$, for $\a \in \NN^n$, yields bounds on the invariants $\tau_\a(I)$, $\tau^*_\a(I) = \nu^*_\a(I)$, and $\nu_\a(I)$ for all $\a \in \NN^n$. As we have seen in Section \ref{sec.membership}, these invariants determine whether $x^\a$ belongs to the ideals $I^k$, $\overline{I^k},$ and $I^{(k)}$. Therefore, Lemma \ref{lem.equiv} allows us to derive new containments between these ideals from known bounds on the matching, covering and fractional matching (covering) numbers of hypergraphs.

We will apply this method only to those bounds on $\nu(\H)$, $\nu^*(\H) = \tau^*(\H)$ and $\tau(\H)$,
which involve the \emph{rank} of $\H$. Recall that the \emph{rank} of $\H$, denoted by $\rk(\H)$, is the maximum cardinality of an edge in $\H$.
By the definition of parallelization, $\rk(\H^\a) \le \rk(\H)$ for all $\a \in \NN^n$. Therefore, we would get bounds on the invariants $\tau_\a(I)$, $\tau^*_\a(I) = \nu^*_\a(I)$, and $\nu_\a(I)$, which also involve $\rk(\H)$. On the other hand,
$\rk(\H)$ is just the {\em maximal generating degree} $d(I)$, which denotes
the maximum degree of a minimal monomial generator of $I$.

\begin{theorem} \label{thm.containments}
Let $I$ be a squarefree monomial ideal, and let $r = d(I)$. Then, for any $k \ge 1$, we have
\begin{enumerate}
\item[{\rm (i)}] $\overline{I^{(r-1)(k-1) + \big\lceil\frac{k}{r}\big\rceil}} \subseteq I^k$;
\item[{\rm (ii)}] $I^{(\lceil (1+ \frac{1}{2} + \cdots + \frac{1}{r})k\rceil)} \subseteq \overline{I^k}$;
\item[{\rm (iii)}] $I^{(rk-r+1)} \subseteq I^k.$
\end{enumerate}
\end{theorem}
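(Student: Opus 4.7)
The plan is to translate each of the three containments, via the membership criteria (Propositions \ref{ordinary} and \ref{symbolic}) together with Proposition \ref{parallel-2}, into a gap estimate between the matching, fractional matching, and covering numbers of the parallelization $\H^\a$ of the hypergraph $\H$ with $I = I(\H)$, and then to invoke the corresponding classical bound from combinatorial optimization. Crucially, $\rk(\H^\a) \le \rk(\H) = d(I) = r$, so every invocation concerns a hypergraph of rank at most $r$. Each numerical inequality is then converted back into a containment by Lemma \ref{lem.equiv}(ii) or Corollary \ref{thm.SymbInt}.

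For (iii), I would use the elementary greedy bound $\tau(\mathcal{H}') \le r\,\nu(\mathcal{H}')$ valid for every hypergraph $\mathcal{H}'$ of rank at most $r$: the vertices spanned by a maximal matching have size $\le r\,\nu(\mathcal{H}')$ and, by maximality, meet every edge. Combined with Proposition \ref{parallel-2} this gives $\tau_\a(I) \le r\,\nu_\a(I)$ for every $\a \in \NN^n$; Lemma \ref{lem.equiv}(ii) applied with $f(k) = rk - (r-1)$ then yields $I^{(rk-r+1)} \subseteq I^k$.

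For (ii), I would invoke Lov\'asz's set-cover bound $\tau(\mathcal{H}') \le H_r\,\tau^*(\mathcal{H}')$ for rank-$r$ hypergraphs, where $H_r = 1 + \tfrac{1}{2} + \cdots + \tfrac{1}{r}$, proved by analysing the greedy covering algorithm. Combined with the LP duality $\tau^*_\a(I) = \nu^*_\a(I)$, this becomes $\tau_\a(I) \le H_r\,\nu^*_\a(I)$, and Corollary \ref{thm.SymbInt} with the strictly increasing function $f(k) = H_r k$ delivers $I^{(\lceil H_r k \rceil)} \subseteq \overline{I^k}$.

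For (i), the crucial input is F\"uredi's bound $\nu^*(\mathcal{H}') \le (r - 1 + 1/r)\,\nu(\mathcal{H}')$ for hypergraphs of rank at most $r$. Translated through Proposition \ref{parallel-2}, $\nu^*_\a(I) \le (r-1)\,\nu_\a(I) + \nu_\a(I)/r$. I would then apply Lemma \ref{lem.equiv}(ii) with $f(k) = (r-1)(k-1) + k/r$ (so that $\lceil f(k)\rceil = (r-1)(k-1) + \lceil k/r \rceil$); the required hypothesis is the strict inequality $\nu^*_\a(I) < (r-1)\nu_\a(I) + \lceil (\nu_\a(I)+1)/r \rceil$, which follows from F\"uredi's bound together with the trivial observation $\nu_\a(I)/r < \lceil (\nu_\a(I)+1)/r \rceil$ for $\nu_\a(I) \in \NN$. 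This rounding trick, exploiting that $\nu_\a(I)$ is an integer, is what saves the extra $r-1$ in the exponent and produces the sharper containment rather than the weaker $\overline{I^{(r-1)k + \lceil k/r \rceil}} \subseteq I^k$ one would obtain by a naive application of Corollary \ref{thm.SymbInt}. The main obstacle is (i): F\"uredi's theorem is the only nontrivial ingredient from extremal hypergraph theory, and its formulation for hypergraphs of rank at most $r$ (rather than $r$-uniform hypergraphs) must either be quoted or reduced to via a padding argument; parts (ii) and (iii) rely only on the standard greedy analyses.
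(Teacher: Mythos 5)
Your proposal is correct and follows essentially the same route as the paper: the same three combinatorial inputs (the greedy bound $\tau \le r\nu$, Lov\'asz's $\tau \le (1+\tfrac12+\cdots+\tfrac1r)\tau^*$, and the F\"uredi--Kahn--Seymour bound $\nu^* \le (r-1+\tfrac1r)\nu$, cited in the paper as \cite[Theorem 1.2]{FKS}) applied to the parallelization $\H^\a$ via Proposition \ref{parallel-2}, and then converted into containments through Lemma \ref{lem.equiv}(ii) and Corollary \ref{thm.SymbInt}. In particular, your integrality trick in (i) is exactly how the paper gains the extra $r-1$: it takes $f(k)=(r-1)(k-1)+\tfrac{k}{r}$ and verifies $\nu^*_\a(I) \le \tfrac{r^2-r+1}{r}\nu_\a(I) < f(\nu_\a(I)+1)$ before invoking Lemma \ref{lem.equiv}(ii).
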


\begin{proof}
Let $\H$ be the hypergraph associated to $I$.
Note that $\rk(\H^\a) \le \rk(\H) = r$ for all $\a \in \NN^n$. \par

(i) Let $f: \NN \longrightarrow \RR_+$ be the function defined by
$$f(k) := (r-1)(k-1) + \dfrac{k}{r} = \dfrac{r^2-r+1}{r}(k-1) + \dfrac{1}{r}.$$
Clearly, $f$ is a non-decreasing function.
It follows from \cite[Theorem 1.2]{FKS} that
$$\nu^*(\H^\a) \le \frac{r^2-r+1}{r}\nu(\H^\a).$$
This, together with Proposition \ref{parallel-2}, implies that for all $\a \in \NN^n$, we have
 $$\nu^*_\a(I) \le \dfrac{r^2-r+1}{r}\nu_\a(I) < \dfrac{r^2-r+1}{r}\nu_\a(I)+\dfrac{1}{r} = f(\nu_\a(I)+1).$$
Thus, (i) follows by invoking Lemma \ref{lem.equiv}(ii). \par

(ii) Let $f: \NN \longrightarrow \RR_+$ be the function defined by
$$f(k) := \big(1+ \frac{1}{2} + \cdots + \frac{1}{r}\big)k.$$
Then $f(k)$ is a strictly increasing function. By \cite[Proof of Lemma 1.6.4]{SU}, we have
$$\tau(\H^\a) \le \big(1+ \frac{1}{2} + \cdots + \frac{1}{r}\big)\tau^*(\H^\a).$$
for all $\a \in \NN^n$. Proposition \ref{parallel-2} now implies that
$$\tau_\a(I) \le \big(1+ \frac{1}{2} + \cdots + \frac{1}{r}\big)\nu^*_\a(I) = f(\nu^*_\a(I)).$$
Hence, (ii) follows from Corollary \ref{thm.SymbInt}. \par

(iii) It is a basic fact (and easy to see) that $\tau(\H) \le r\nu(\H)$.
Applying this to the parallelization $\H^\a$ we obtain
$$\tau(\H^\a) \le r\nu(\H^\a).$$
By Proposition \ref{parallel-2}, it follows that for all $\a \in \NN^n$,
\begin{align*}
\tau_\a(I) \le r\nu_\a(I) < r(\nu_\a(I)+1) - r +1.
\end{align*}
\noindent Let $f: \NN \longrightarrow \RR_+$ be the function defined by
$$f(k) :=  rk-r+1.$$
Then $f$ is a non-decreasing function and
$$\tau_\a(I) < f(\nu(\a)+1).$$
Hence, (iii) is a consequence of  Lemma \ref{lem.equiv}(ii).
\end{proof}

Even for edge ideals of graphs, Theorem \ref{thm.containments} appears to be new and interesting.

\begin{corollary} \label{cor.graphcontainment}
Let $I$ be the edge ideal of a graph. Then for any $k \in \NN$, we have
\begin{enumerate}
\item[\rm (i)] $\overline{I^{\lceil\frac{3}{2}k\rceil-1}} \subseteq I^k$; 
\item[\rm (ii)]  $I^{(\lceil \frac{3}{2}k\rceil)} \subseteq \overline{I^k}$;
\item[\rm (iii)] $I^{(2k-1)} \subseteq I^k$.
\end{enumerate}
\end{corollary}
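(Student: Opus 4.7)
The plan is to deduce this corollary as a direct specialization of Theorem \ref{thm.containments} to the case $r = d(I) = 2$, since the edge ideal of a simple graph is generated in degree exactly $2$. After substitution, the only real work is to verify that the arithmetic expressions appearing in the theorem collapse to the cleaner expressions stated in the corollary. No new combinatorics or commutative algebra is needed.

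First I would note that for an edge ideal of a graph we have $r = 2$, so Theorem \ref{thm.containments}(iii) immediately gives $I^{(2k-2+1)} = I^{(2k-1)} \subseteq I^k$, which is statement (iii). Similarly, part (ii) of the theorem becomes $I^{(\lceil (1 + \frac{1}{2})k\rceil)} = I^{(\lceil \frac{3}{2}k\rceil)} \subseteq \overline{I^k}$, giving (ii) with no further simplification.

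For part (i), substituting $r=2$ into Theorem \ref{thm.containments}(i) yields
\[
\overline{I^{(k-1) + \lceil k/2 \rceil}} \subseteq I^k,
\]
so it suffices to check the identity $(k-1) + \lceil k/2 \rceil = \lceil 3k/2 \rceil - 1$ for every $k \ge 1$. This is an elementary case check: when $k$ is even, both sides equal $3k/2 - 1$, and when $k$ is odd, both sides equal $(3k-1)/2$. Combining these three pieces gives the corollary.

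The only ``obstacle'' here is purely cosmetic, namely recognizing the equivalence of the ceiling expressions in (i); conceptually the result is simply the $r=2$ slice of the general theorem. I would mention explicitly that the gap estimates used in proving Theorem \ref{thm.containments} (the Füredi--Kahn--Seymour bound, the Lovász logarithmic bound, and the elementary $\tau(\H) \le r\nu(\H)$) all specialize correctly for graphs, so that no sharper graph-specific inputs are required.
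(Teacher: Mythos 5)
Your proposal is correct and follows exactly the route the paper intends: the corollary is just the $r=2$ specialization of Theorem \ref{thm.containments}, and your case check that $(k-1)+\lceil k/2\rceil = \lceil 3k/2\rceil - 1$ supplies the only arithmetic the paper leaves implicit. Nothing further is needed.
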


For instance, Corollary \ref{cor.graphcontainment}(i) implies a surprising fact that $\overline{I^2} = I^2$,
i.e., $I^2$ is integrally closed.

\begin{example}  The containments in Corollary \ref{cor.graphcontainment} and, thus, in Theorem \ref{thm.containments} are sharp as seen from the following examples. Let $R = \QQ[x_1, \dots, x_8]$.
	
(i) Consider the edge ideal
$$I = (x_1x_2, x_2x_3, x_1x_5, x_2x_5, x_1x_6, x_2x_6, x_3x_6, x_5x_6, x_4x_7, x_5x_7, x_4x_8, x_7x_8) \subseteq R.$$
Direct computation with Macaulay2 shows that $\overline{I^3} \neq I^3$, while Corollary \ref{cor.graphcontainment}(i) gives $\overline{I^4} \subseteq I^3$.

(ii) Consider the edge ideal
$$I = (x_1x_4, x_2x_6, x_2x_7, x_3x_7, x_5x_7, x_6x_7, x_1x_8, x_2x_8, x_5x_8, x_6x_8, x_7x_8) \subseteq R.$$
Direct computation with Macaulay2 shows that $I^{(2)} \not\subseteq \overline{I^2}$ and $I^{(5)} \not\subseteq \overline{I^4}$, while Corollary \ref{cor.graphcontainment}(ii) gives $I^{(3)} \subseteq \overline{I^2}$ and $I^{(6)} \subseteq \overline{I^4}$.

(iii) Consider the edge ideal
\begin{align*}
I = (& x_1x_2, x_1x_3, x_2x_3, x_1x_4, x_2x_4, x_3x_4, x_1x_5, x_2x_5, x_3x_5, x_4x_5 \\
& x_2x_6, x_3x_6, x_5x_6, x_4x_7, x_6x_7) \subseteq R.
\end{align*}
Direct computation with Macaulay2 shows that $I^{(2)} \not\subseteq I^2$ and $I^{(4)} \not\subseteq I^3$, while Corollary \ref{cor.graphcontainment}(iii) gives $I^{(3)} \subseteq I^2$ and $I^{(5)} \subseteq I^3$.
\end{example}

As a corollary of Theorem \ref{thm.containments}(iii), we further obtain a bound for the resurgence number of squarefree monomial ideals.
Recall that for an arbitrary homogeneous ideal $I$, the {\em resurgence number} of $I$ is defined to be
$$\rho(I) = \sup\Big\{\frac{h}{k} ~\Big|~ I^{(h)} \not\subseteq I^k\Big\}.$$
This notion was due to Harbourne and Bocci \cite{BH}. Instead of $\rho(I)$, we propose to study the following closely related invariant:
$$\rho_{\inf}(I) = \inf \Big\{\frac{h}{k} ~\Big|~ I^{(h)} \subseteq I^k\Big\},$$
which is more in line with the containments between powers of $I$ as being discussed.
It is clear that $\rho(I) \le \rho_{\inf}(I)$.

\begin{corollary} \label{HB}
Let $I$ a squarefree monomial ideal. Then
$$\rho_{\inf}(I) \le d(I).$$
\end{corollary}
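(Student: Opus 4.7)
The plan is to derive this bound as a direct consequence of Theorem \ref{thm.containments}(iii). Writing $r = d(I)$, that theorem guarantees the containment $I^{(rk-r+1)} \subseteq I^k$ for every $k \ge 1$, so for each such $k$ the ratio $\frac{rk-r+1}{k}$ belongs to the set
$$\Big\{\tfrac{h}{k} \,\Big|\, I^{(h)} \subseteq I^k\Big\}$$
whose infimum defines $\rho_{\inf}(I)$.

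The only remaining step is a one-line computation. Rewriting
$$\frac{rk-r+1}{k} = r - \frac{r-1}{k},$$
one sees that this quantity is bounded above by $r$ for every $k \ge 1$, converging to $r$ from below as $k \to \infty$. Since $\rho_{\inf}(I)$ is dominated by every member of the defining set, we obtain $\rho_{\inf}(I) \le r - (r-1)/k \le r = d(I)$, which is the desired inequality. (Choosing any particular $k$ suffices; taking $k \to \infty$ shows that the family of ratios exhibited by Theorem \ref{thm.containments}(iii) already accumulates at $d(I)$, so the bound is naturally asymptotic.)

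There is no substantive obstacle in this argument: the real work has already been carried out in Theorem \ref{thm.containments}(iii), which was itself obtained from the elementary matching--covering inequality $\tau(\H^\a) \le r\nu(\H^\a)$ together with the parallelization identity of Proposition \ref{parallel-2} and the membership criteria of Propositions \ref{ordinary} and \ref{symbolic}. The corollary is thus an immediate reformulation of that containment in the language of the resurgence invariant.
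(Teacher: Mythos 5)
Your proposal is correct and follows essentially the same route as the paper: both deduce from Theorem \ref{thm.containments}(iii) that the ratios $\frac{rk-r+1}{k} = r - \frac{r-1}{k} \le r$ lie in the set defining $\rho_{\inf}(I)$, so $\rho_{\inf}(I) \le d(I)$.
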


\begin{proof} It follows from Theorem \ref{thm.containments} that
$$\rho_{\inf}(I) \le \inf_{k \ge 1} \Big\{ \frac{d(I)k-d(I)+1}{k}\Big\} = \inf_{k \ge 1} \Big\{ \frac{d(I)(k-1)+1}{k} \Big\} \le d(I).$$ Thus, the inequality holds.
\end{proof}

The inequality $\rho(I) \le d(I)$ has also been discovered independently by a different method in \cite{DFMS}. Specializing to edge ideals of graphs, Corollary \ref{HB} gives us the following statement.

\begin{corollary} \label{cor.graph}
Let $I$ be the edge ideal of a graph. Then
$$\rho(I) \le 2.$$
\end{corollary}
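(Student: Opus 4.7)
The plan is to observe that this is essentially an immediate specialization of Corollary \ref{HB}. If $G$ is a (simple) graph, then its edge ideal $I = I(G)$ is generated by the squarefree quadratic monomials $x_ix_j$ indexed by the edges $\{i,j\}$ of $G$, so every minimal generator has degree exactly $2$, giving $d(I) = 2$.

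With this observation in hand, I would proceed as follows. First, invoke Corollary \ref{HB} to conclude $\rho_{\inf}(I) \le d(I) = 2$. Next, recall the elementary inequality $\rho(I) \le \rho_{\inf}(I)$ noted in the paragraph preceding Corollary \ref{HB}; this holds because any ratio $h/k$ witnessing the supremum defining $\rho(I)$ (so that $I^{(h)} \not\subseteq I^k$) must be strictly less than every ratio $h'/k'$ for which $I^{(h')} \subseteq I^{k'}$, since the symbolic powers form a decreasing filtration and the ordinary powers form an increasing one (in the appropriate sense of containment under raising the index). Chaining these two estimates yields $\rho(I) \le \rho_{\inf}(I) \le 2$.

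There is essentially no obstacle: the only input beyond the previously established Corollary \ref{HB} is the trivial fact that edges of a graph correspond to degree-$2$ generators. I would therefore present the argument in a single short paragraph, explicitly pointing out that the result also recovers (via a monomial route) the known bound $\rho(I(G)) \le 2$ for edge ideals of graphs, while the sharpness in Theorem \ref{thm.containments}(iii) demonstrated in the preceding example shows that the bound cannot be improved in general.
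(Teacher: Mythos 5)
Your argument is exactly the paper's: the corollary is obtained by specializing Corollary \ref{HB} to the case $d(I)=2$ (edge ideals of graphs are generated in degree $2$) together with the inequality $\rho(I)\le\rho_{\inf}(I)$ stated just before Corollary \ref{HB}. Your parenthetical attempt to justify $\rho(I)\le\rho_{\inf}(I)$ purely from monotonicity of the two filtrations is not by itself a proof of that comparison, but since the paper asserts this inequality and you otherwise follow the same specialization, your proposal matches the paper's proof.
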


\begin{remark} Let $G$ be a graph and let $I = I(G)$. Let $\chi_f(G)$ denote the fractional chromatic number of $G$ (see \cite{SU} for more details on fractional chromatic numbers of graphs). Then, it follows from \cite[Theorem 1.2.1]{BH} and \cite[Theorem 4.6]{B.etal} that\footnote{The authors thank Adam Van Tuyl for pointing them to this inequality.}
\begin{align*}
\rho(I) \ge \dfrac{2(\chi_f(G)-1)}{\chi_f(G)}.
\end{align*}
Thus, by taking graphs with large fractional chromatic numbers, we can make $\rho(I)$ to be arbitrarily close to 2. That is, the bound for $\rho(I)$ in Corollary \ref{cor.graph} and, hence, Corollary \ref{HB} is sharp.
\end{remark}

\begin{question}
Are there similar containments as those in Theorem \ref{thm.containments} (involving $d(I)$) for an arbitrary homogeneous radical (or prime) ideals?
\end{question}

It is interesting to note that containments between powers of ideals have been studied usually from a different angle, involving different invariants, for example, the minimal number of generators \cite{Huneke, LS, LT} or the maximal height of the minimal primes \cite{ELS,HH, MaS}, but not the maximal generating degree as in Theorem \ref{thm.containments}.


\section{From containments between ideals to gap estimates} \label{sec.containment2gap}

This section is a continuation of the previous section.
Making use of known containments between powers of a monomial ideal, we derive bounds for the integrality gap of certain linear programming problems, and estimate the gap between the matching and covering numbers of hypergraphs. Based on the equivalences given in Lemma \ref{lem.equiv}, we also present an equivalent algebraic reformulation for Ryser's conjecture, a long standing conjecture in hypergraph theory.

Let $M$ be an $n \times m$ matrix of non-negative integers and $\a \in \NN^n$. The integer programming problem \par
\hspace{0.8cm} maximize $\y \cdot \1^m$, \par
\hspace{0.8cm} subject to $M \cdot \y \le \a,\ \y \in \NN^m$\par
\noindent is called the {\em packing problem} in combinatorial optimization. Recall that the optimal solution of this integer programming problem and its relaxation to $\y \in \RR_{\ge 0}^n$ are denoted by $\nu_\a(M)$ and $\nu^*_\a(M)$.

The real optimal solution $\nu^*_\a(M)$ can be computed quite easily by tools from Linear Programming. On the other hand, the computation of $\nu_\a(M)$ is an {\bf NP}-hard problem.
The more intriguing question is how far $\nu_\a(M)$ differs from $\nu^*_\a(M)$.
Using the celebrated Brian\c{c}on-Skoda theorem in algebra we give the following estimate which appears not yet known in combinatorics.

\begin{theorem} \label{BS}
Let $M$ be an $n \times m$ matrix of non-negative integers. Then
$$\nu^*_\a(M) < \nu_\a(M) + \min\{m,n\}.$$
\end{theorem}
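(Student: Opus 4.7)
The plan is to reduce Theorem~\ref{BS} to the Briançon--Skoda containment cited in the paragraph preceding it, combined with the membership criteria of Proposition~\ref{ordinary}. View $M$ as the exponent matrix of the monomial ideal $I := (x^{\a_1},\dots,x^{\a_m}) \subseteq R = K[x_1,\dots,x_n]$, where $\a_1,\dots,\a_m$ are the columns of $M$. One may as well assume these columns are the minimal generators of $I$: if some $\a_j$ is coordinatewise $\ge \a_i$ for $i \ne j$, then every feasible $\y$ for $M$ using the $j$-th column can be rewritten to use the $i$-th column instead with the same value of $\1^m\cdot\y$ and no more usage of $\a$, and conversely any feasible vector for the reduced matrix is feasible for $M$. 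Thus neither $\nu_\a(M)$ nor $\nu^*_\a(M)$ is affected, while $m$ can only decrease, which is harmless for the stated bound.

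With this in hand, proceed by contradiction. Set $c := \min\{m,n\}$ and suppose that $\nu^*_\a(M) \ge \nu_\a(M) + c$. Define $k := \nu_\a(M) + 1$, an integer. The assumption reads
\[
\nu^*_\a(I) \;=\; \nu^*_\a(M) \;\ge\; k + c - 1,
\]
so Proposition~\ref{ordinary}(ii) gives $x^\a \in \overline{I^{k+c-1}}$. Invoking the Briançon--Skoda theorem of Lipman--Sathaye and Lipman--Teissier in the form
\[
\overline{I^{j + c - 1}} \;\subseteq\; I^j \qquad (j \ge 1),
\]
recorded in the paragraph above the theorem, produces $x^\a \in I^k$. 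But then Proposition~\ref{ordinary}(i) yields $\nu_\a(I) \ge k = \nu_\a(M) + 1$, a direct contradiction with $\nu_\a(I) = \nu_\a(M)$.

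The logical chain is short; the only real subtlety is justifying that the constant in Briançon--Skoda can indeed be taken to be $\min\{m,n\}$ rather than, say, the analytic spread $\ell(I)$. The standard formulation gives $\overline{I^{j+\ell(I)-1}} \subseteq I^j$, and $\ell(I)$ is bounded above both by the minimal number of generators of $I$ and by $\dim R = n$, yielding the desired form. This is already asserted by the authors in the discussion preceding the statement, so once that is accepted the proof reduces to the three-line deduction above.
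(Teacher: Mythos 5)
Your proof is correct and follows essentially the same route as the paper: interpret $M$ as the exponent matrix of a monomial ideal $I$, invoke the Brian\c{c}on--Skoda containment $\overline{I^{k+\min\{m,n\}-1}}\subseteq I^k$, and translate via the membership criteria of Proposition~\ref{ordinary}. The only difference is cosmetic: you unwind the translation directly by contradiction (and carefully justify the harmless reduction to minimal generators), whereas the paper packages the same deduction through Lemma~\ref{lem.equiv}(ii).
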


\begin{proof}
Let $\a_1, \dots, \a_m$ be the columns of $M$. Let $I$ be the monomial ideal generated by $\{x^{\a_1}, \dots, x^{\a_m}\}$. Then $\nu_\a(M) = \nu_\a(I)$ and $\nu^*_\a(M) = \nu^*_\a(I)$.
By the Brian\c{c}on-Skoda theorem (cf. \cite{Huneke, LS, LT}), we have for all $k \ge 1$,
$$\overline{I^{k+\min\{m,n\}-1}} \subseteq I^k.$$
Let $f(k) := k + \min\{m,n\}-1$.
Applying Lemma \ref{lem.equiv}(ii) to the ideals $\overline{I^k}$, $I^k$, the functions $\nu^*_\a(M)$, $\nu_\a(M)$, and $f(k)$, we obtain
$$\nu^*_\a(I) < f(\nu_\a(I)+1) = \nu_\a(I)+\min\{m,n\},$$
which proves the assertion.
\end{proof}

Another gap estimate is related to a conjecture of Harbourne (see, for example, \cite{Ba}),
which asks whether the containment $I^{(hk-h+1)} \subseteq I^k$ holds for every proper homogeneous ideal $I$ and $k \ge 1$, where $h$ denotes the maximal height of an associated prime of $I$. This conjecture is inspired by the formula
$I^{(hk)} \subseteq I^k$, which was discovered by Ein, Lazarsfeld, and Smith \cite{ELS}, Hochster and Huneke \cite{HH}, Ma and Schwede \cite{MaS}.
The conjecture of Harbourne has an affirmative answer when $I$ is a squarefree monomial ideal (see \cite{Ba, CEHH}).
Making use of this result, we deduce the following estimate for the gap between $\tau_\a(M)$ and $\nu_\a(M)$.

\begin{theorem} \label{Ha}
Let $M$ be the incidence matrix of a simple hypergraph $\H$.
Let $h$ be the maximal size of a minimal cover of $\H$.
Then for all $\a \in \NN^n$,
$$\tau_\a(M) \le h\nu_\a(M).$$
\end{theorem}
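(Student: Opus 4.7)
The plan is to translate the desired inequality $\tau_\a(M) \le h\nu_\a(M)$ into a containment between symbolic and ordinary powers of the edge ideal of $\H$, apply the squarefree case of Harbourne's conjecture cited just before the theorem, and then convert back using the membership criteria and Lemma \ref{lem.equiv}(ii).

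First I would set up the dictionary. Let $I = I(\H)$ be the edge ideal of $\H$, so $M$ is both the incidence matrix of $\H$ and the exponent matrix of $I$; in particular $\nu_\a(M) = \nu_\a(I)$ and $\tau_\a(M) = \tau_\a(I)$. The minimal primes of $I$ are precisely the ideals $P_F$ with $F$ a minimal vertex cover of $\H$, and $\height(P_F) = |F|$. Hence the maximal height of an associated prime of $I$ is exactly the hypothesis quantity $h$, the maximal size of a minimal cover.

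Next, invoke the squarefree case of Harbourne's conjecture (cf.\ \cite{Ba,CEHH}) already recalled in the excerpt: for all $k \ge 1$,
\[
I^{(hk-h+1)} \subseteq I^k.
\]
Now I would apply Lemma \ref{lem.equiv}(ii) with the filtrations $I_k := I^{(k)}$ and $J_k := I^k$, whose membership functions are $\mu(\a) = \tau_\a(I)$ by Proposition \ref{symbolic} and $\rho(\a) = \nu_\a(I)$ by Proposition \ref{ordinary}(i), and with the non-decreasing function $f(k) = hk - h + 1$. Since $\lceil f(k) \rceil = f(k)$ for integer $k$, the lemma yields that for every $\a \in \NN^n$,
\[
\tau_\a(I) \;<\; f\bigl(\lfloor \nu_\a(I) \rfloor + 1\bigr) \;=\; h\bigl(\nu_\a(I)+1\bigr) - h + 1 \;=\; h\,\nu_\a(I) + 1.
\]
Because $\tau_\a(I)$ and $\nu_\a(I)$ are integers, this strict inequality upgrades to $\tau_\a(I) \le h\,\nu_\a(I)$, which is exactly the claim.

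There is really no obstacle beyond correctly matching the Harbourne-type containment with the integer round-down bookkeeping in Lemma \ref{lem.equiv}(ii); the only subtle point is the off-by-one at the end, where one must notice that $f$ takes integer values and that $\nu_\a(I) \in \NN$, so the strict inequality $\tau_\a(I) < h\,\nu_\a(I) + 1$ is equivalent to the desired non-strict one. No separate argument is needed for $\a$ with $\nu_\a(I) = 0$, since in that case $x^\a \notin I$ forces $\tau_\a(I) = 0$ as well through Proposition \ref{symbolic}.
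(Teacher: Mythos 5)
Your proposal is correct and follows essentially the same route as the paper's proof: translate $\tau_\a(M),\nu_\a(M)$ into $\tau_\a(I),\nu_\a(I)$ for the edge ideal, invoke the squarefree Harbourne containment $I^{(hk-h+1)}\subseteq I^k$, and apply Lemma \ref{lem.equiv}(ii) with $f(k)=h(k-1)+1$ to get $\tau_\a(I) < h\nu_\a(I)+1$, upgraded to $\le$ by integrality. Nothing further is needed.
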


\begin{proof}
Let $I$ be the edge ideal of $\H$. Then for all $\a \in \NN^n$, we have
$\tau_\a(M) = \tau_\a(I)$ and $\nu_\a(M) = \nu_\a(I).$
It follows from \cite[Example 8.4.5]{Ba} (see also \cite[Corollary 4.4]{CEHH}) that
$$I^{(hk-h+1)} \subseteq I^k$$
for all $k \ge 1$. Set $f(k) := h(k-1)+1.$
Applying Lemma \ref{lem.equiv}(ii) to the ideals $I^{(k)}$, $I^k$, the functions $\tau_\a(I)$, $\nu_\a(I)$, and $f(k)$, we obtain
$$\tau_\a(I) < f(\nu_\a(I)+1) = h\nu_\a(I)+1.$$
Since $\tau_\a(I)$ and $\nu_\a(I)$ are integers, this implies $\tau_\a(I) \le h\nu_\a(I).$ The conclusion follows.
\end{proof}

In Theorem \ref{Ha}, we cannot replace $h$ by the minimal size of a minimal cover of $\H$, which is $\tau(\H)$.
Algebraically, this means that the formula
$$I^{(\height(I)k - \height(I) +1)} \subseteq I^k$$
does not hold for an arbitrary squarefree monomial ideal $I$ and all $k \ge 1$.

\begin{example}
Let $\G$ be the hypergraph whose edges are $\{1,2\}$ and all 5-subsets of $[1,8]$ not containing $\{1,2\}$.
Let $\H$ be the hypergraph whose edges are subsets of $[1,8]$ of the form $\{1,2,i,j\}$, $\{1,i,j,t\}$ and $\{2,i,j,t\}$, where $3 \le i,j,t \le 8$ are different numbers. It is easy to check that edges in $\H$ are the minimal covers of $\G$. That is, $\H = \G^\vee$. Thus, $\G = \H^\vee$. In particular, $\tau(\H) = 2$. \par
Let $I$ be the edge ideal of $\H$ in $K[x_1,\dots,x_8]$. Then
$$I^{(k)} = \bigcap_{F \in \G} P_F^k.$$
It is easy to see that $f := x_1^3x_2^2x_3 \dots x_8 \in I^{(5)}$. Since $\deg(f) = 11$, $f \not \in I^3$ because $I$ is generated by monomials of degree 4. Therefore, $I^{(2k-2+1)} = I^{(2k-1)}\not\subseteq I^k$ for $k = 3$.
By Lemma \ref{lem.equiv}(ii), we conclude that the inequality $\tau_\a(M) \le 2\nu_\a(M)$ 
does not hold for all $\a \in \NN^8$, where $M$ is the incident matrix of $\H$.
\end{example}

\begin{remark}
If $M$ is the incidence matrix of a hypergraph $\H$ and if $\a = \1^n$ then the bounds in Theorems \ref{BS} and \ref{Ha} are trivial.
In this case, we have
\begin{align*}
\nu^*_\a(\H) \le \tau_\a(\H) & = \tau(\H) \le \min\{m,n\},\\
\tau_\a(\H) & = \tau(\H) \le h.
\end{align*}
Hence, these results are interesting only for more general $\a \in \NN^n$.
\end{remark}

In the study of parallelization of hypergraphs, it is often of interest to ask the following question: given a hypergraph $\G$, which hypergraph $\H$ has the smallest number of vertices such that $\G = \H^\a$ for some positive integral vector $\a$? In investigating this question, the following notions prove to be of importance.

Two vertices $u$ and $v$ of $\G$ are said to be \emph{clones} if $u,v$ are not contained in any edge of $\G$, and $F$ is an edge in $\G$ containing $u$ if and only if $F - u+v$ is an edge in $\G$. The terminology \emph{clone} is adapted from \cite{MNR}.
In particular, if $\G$ is a graph, then $u, v$ are \emph{clones} if and only if $u,v$ are \emph{twins}, i.e., they share the same open neighborhood.
It follows from the definition that if $\G = \H^\a$ for a hypergraph $\H$ and a positive integral vector $\a = (\alpha_1, \dots, \alpha_n)$, then for each $i \in \supp(\a)$ with $\alpha_i \ge 2$, the vertices $\{i_1,\dots,i_{\alpha_i}\}$ of $\G$ are pairwise clones.

Now, for each vertex $u \in \G$ we denotes by $[u]$ the class of the clones of $u$.  Let $\H$ denote the hypergraph whose vertices are the clone classes and whose edges are sets of the form $\{[u_1],\dots,[u_s]\}$ with $\{u_1,\dots,u_s\}$ being an edge of $\G$. Assume that $\G$ has $n$ different clone classes whose cardinality are $\alpha_1,\dots,\alpha_n$. It is easy to see that $\G = \H^\a$ for $\a = (\alpha_1,\dots,\alpha_n)$. It can be shown that $\H$ is a hypergraph with the smallest number of vertices such that $\G = \H^\a$ for some positive integral vector $\a$.
For simplicity, we call $\H$ the \emph{reduced clone-free hypergraph} of $\G$.

Using the reduced clone-free hypergraph we can improve the bound of Theorem \ref{Ha} as follows.

\begin{theorem} \label{thm.tauhnu}
Let $\G$ be an arbitrary hypergraph. Let $h^*$ denote the maximum cardinality of minimal covers of the reduced clone-free hypergraph of $\G$. Then
$$\tau(\G) \le h^*\nu(\G).$$
\end{theorem}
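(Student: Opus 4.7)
The plan is to realize the arbitrary hypergraph $\G$ as a parallelization of its reduced clone-free hypergraph $\H$, and then reduce the assertion to the inequality already established in Theorem \ref{Ha} applied to $\H$. The bridge between the combinatorial invariants of $\G = \H^\a$ and the linear-programming invariants of the edge ideal of $\H$ is exactly what Proposition \ref{parallel-2} provides, so the argument is really a two-line synthesis of those two results.

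More precisely, I would proceed as follows. Let $\H$ denote the reduced clone-free hypergraph of $\G$, and say $\H$ has $n$ vertices (the clone classes of $\G$) of sizes $\alpha_1, \dots, \alpha_n$. Setting $\a = (\alpha_1, \dots, \alpha_n) \in \NN^n$, the discussion preceding the theorem shows that $\G = \H^\a$. Let $I = I(\H)$ be the edge ideal of $\H$; its exponent matrix $M$ is precisely the incidence matrix of $\H$. By Proposition \ref{parallel-2}(i) and (iv),
\begin{equation*}
\nu(\G) \;=\; \nu(\H^\a) \;=\; \nu_\a(I) \;=\; \nu_\a(M)
\qquad\text{and}\qquad
\tau(\G) \;=\; \tau(\H^\a) \;=\; \tau_\a(I) \;=\; \tau_\a(M).
\end{equation*}
Since $h^*$ is by definition the maximal size of a minimal cover of $\H$, Theorem \ref{Ha} applied to $\H$ gives $\tau_\a(M) \le h^* \nu_\a(M)$ for every $\a \in \NN^n$, and in particular for the $\a$ recording the clone-class sizes. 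Combining the two displays yields $\tau(\G) \le h^* \nu(\G)$.

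The only point that requires a sanity check, and hence is the likeliest minor obstacle, is that $\H$ qualifies as a simple hypergraph so that Theorem \ref{Ha} is applicable. This follows from the construction: the condition in the definition of clones that no edge of $\G$ contains two clones ensures that the edges of $\H$ are genuine subsets of the set of clone classes, and any strict inclusion $F_1 \subsetneq F_2$ between edges of $\H$ would lift back to a strict inclusion between edges of $\G$, contradicting the standing simpleness assumption on $\G$. Once this is observed, the proof is complete; there is no new combinatorial input beyond Theorem \ref{Ha} and the parallelization dictionary of Section \ref{sec.matchingcovering}.
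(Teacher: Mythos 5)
Your proposal is correct and follows essentially the same route as the paper: realize $\G$ as the parallelization $\H^\a$ of its reduced clone-free hypergraph, translate $\tau(\G)$ and $\nu(\G)$ into $\tau_\a(M)$ and $\nu_\a(M)$ via Proposition \ref{parallel-2}, and apply Theorem \ref{Ha} to $\H$. Your added remark verifying that $\H$ is simple is a reasonable extra check but introduces no new idea beyond the paper's argument.
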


\begin{proof}
Let $\H$ be the reduced clone-free hypergraph of $\G$, and suppose that $\H$ contains $n$ vertices.
Let $\a \in \NN^n$ be such that $\G = \H^\a$. Let $M$ be the incidence matrix of $\H$.
By Proposition \ref{parallel-2},
$\tau_\a(M) = \tau(\G)$ and $\nu_\a(M) = \nu(\G).$
Applying Theorem \ref{Ha} to $\H$, we obtain
$$\tau_\a(M) \le h^*\nu_\a(M).$$
Therefore, $\tau(\G) \le h^*\nu(\G)$.
\end{proof}

By Lemma \ref{parallel-1}(ii), the maximum cardinality of minimal covers of the reduced clone-free hypergraph of $\G$ is less than the maximum cardinality of minimal covers of $\G$. In fact, the difference between these invariants could be made arbitrarily large as seen in the following example. This exhibits the fact that the conclusion of Theorem \ref{thm.tauhnu}, in practice, is significantly stronger than that of Theorem \ref{Ha}.

\begin{example}
Let $\G = K_{1,p}$ be the complete bipartite graph on $\{x; y_1, \dots, y_p\}$. Clearly, $\{y_1, \dots, y_p\}$ is a minimal vertex cover of $G$. Thus, the invariant $h$ in Theorem \ref{Ha} for this example is $p$. On the other hand, let $\H$ be the graph consisting of a single edge $\{x,y\}$, and let $\a = (1,p) \in \NN^2$. Then $\G = \H^\a$ and, so, the invariant $h^*$ in Theorem \ref{thm.tauhnu} for this example is 1.
\end{example}

We now turn our attention to a long standing open conjecture in hypergraph theory, the Ryser's conjecture.
Recall that a hypergraph $\H$ is said to be \emph{$r$-partite} if there is a partition of its vertex set into $r$ parts such that no edge in $\H$ contains two vertices from the same part.

\begin{conjecture}[Ryser] \label{conj.Ryser}
Let $H$ be an $r$-partite hypergraph of rank $\le r$. Then
$$\tau(H) \le (r-1)\nu(H).$$
\end{conjecture}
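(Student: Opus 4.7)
The plan is to separate the conjecture by rank $r$, dispatch the small cases with classical results, and attack the general case through the algebraic reformulation made available by Propositions \ref{ordinary} and \ref{parallel-2} together with Lemma \ref{lem.equiv}.

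For $r=2$, the inequality $\tau(\H) \le \nu(\H)$ on bipartite graphs is K\"onig's theorem, which settles the base case. For $r=3$, I would invoke Aharoni's theorem, proved via the topological Hall theorem of Aharoni--Berger. These are the only unconditional ingredients of the strategy, but the conjecture is wide open for $r\ge 4$, and any serious plan must account for that.

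For $r \ge 4$, my plan is to pass to the algebraic side. By Proposition \ref{parallel-2}, for the edge ideal $I$ of an $r$-partite hypergraph of rank at most $r$ one has $\tau(\H^\a) = \tau_\a(I)$ and $\nu(\H^\a) = \nu_\a(I)$, and every $r$-partite rank-$\le r$ hypergraph arises as a parallelization of one of the same type. So Ryser's inequality on the whole class is equivalent to $\tau_\a(I) \le (r-1)\nu_\a(I)$ for all $\a \in \NN^n$. By Lemma \ref{lem.equiv}(ii) applied with $f(k) = (r-1)(k-1)+1$, this is in turn equivalent to the containment $I^{((r-1)(k-1)+1)} \subseteq I^k$ for all $k \ge 1$, i.e.\ to Conjecture \ref{conj.containment}. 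The integral-closure version $\overline{I^{(r-1)(k-1)+1}} \subseteq I^k$ is already known (Theorem \ref{symbolicVSintegral3}), so the task reduces to showing that for such an $I$ every monomial $x^\a$ with $\nu^*_\a(I) \ge k$ already satisfies $\nu_\a(I) \ge k$. Here the $r$-partite structure becomes central: the variables split into classes $V_1, \dots, V_r$, every generator uses at most one variable from each class, and the exponent matrix $M$ decomposes into $r$ blocks indexed by these classes. My plan is to round a fractional matching $\y \in \RR_{\ge 0}^m$ with $M\y \le \a$ to an integral matching of the same size by successively rounding within pairs of classes via a K\"onig/Hall-type bipartite argument, using the $r$-partite block structure to identify, in each integrally selected edge, one class whose contribution can be released to another edge, thereby saving exactly the single vertex per matching edge that separates the trivial bound $r\nu$ from the Ryser bound $(r-1)\nu$.

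The main obstacle will be precisely this rounding step for $r \ge 4$, which is the heart of Ryser's conjecture and has resisted all purely combinatorial approaches for more than fifty years. The Haxell--Szab\'o--Tardos constructions yield $\tau/\nu$ ratios arbitrarily close to $r-1$, so any proof must be tight at every stage, and the $r$-partite hypothesis must be used in an essential, non-generic way rather than through a reduction that would survive without it; finding the correct class-by-class rounding mechanism that exploits $r$-partiteness, and only $r$-partiteness, is the elusive point of the entire theory and is where I expect the argument to break down.
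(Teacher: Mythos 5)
The statement you were asked to prove is Ryser's conjecture, which the paper itself presents as \emph{an open conjecture} --- it offers no proof, and explicitly describes it as ``one of the famous unsolved problems in combinatorics.'' So there is no proof in the paper to compare against, and your proposal, as you yourself acknowledge, does not constitute one either. The genuine gap is the rounding step for $r \ge 4$: converting a fractional matching of value $k$ into an integral matching of value at least $\lceil k/(r-1)\rceil$-worth of savings (equivalently, closing the gap between $\overline{I^{(r-1)(k-1)+1}} \subseteq I^k$ and $I^{((r-1)(k-1)+1)} \subseteq I^k$) is precisely the content of the conjecture, and no ``class-by-class K\"onig/Hall rounding'' of the kind you sketch is known to exist. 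Your own final paragraph correctly identifies that this is where the argument breaks down, so the proposal is an honest research plan, not a proof.

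That said, the reductions you do carry out are sound and track the paper closely. Your translation of the combinatorial statement into the containment $I^{((r-1)(k-1)+1)} \subseteq I^k$ via Proposition \ref{parallel-2} and Lemma \ref{lem.equiv}(ii) with $f(k) = (r-1)(k-1)+1$ is exactly the paper's Theorem \ref{equiv}, and your observation that the integral-closure analogue is already known is the paper's Theorem \ref{symbolicVSintegral3}. The base cases ($r=2$ via K\"onig, $r=3$ via Aharoni) are correct but lie outside the paper. In short: correct framing, correct partial results, but the central claim remains unproved --- necessarily so, since it is an open problem.
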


This conjecture is often formulated for $r$-partite hypergraphs which are $r$-uniform,
i.e., all edges are of the same size $r$.
In fact, we can always add new and distinct vertices to edges of an $r$-partite hypergraph of rank $\le r$ to get an $r$-uniform $r$-partite hypergraph with the same matching and covering numbers.

In connection to Ryser's conjecture, we shall make the following conjecture on the containment between symbolic and ordinary powers of squarefree monomial ideals.

\begin{conjecture} \label{conj.containment}
Let $I$ be the edge ideal of an $r$-partite hypergraph of rank $\le r$. Then, for all $k \in \NN$, we have
$$I^{((r-1)(k-1)+1)} \subseteq I^k.$$
\end{conjecture}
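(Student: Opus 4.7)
The plan is to reduce Conjecture \ref{conj.containment} to Ryser's conjecture (Conjecture \ref{conj.Ryser}) via the membership criteria and the parallelization technique developed in Sections \ref{sec.membership} and \ref{sec.matchingcovering}. Because $I$ is squarefree, Proposition \ref{symbolic} says that $x^\a \in I^{(k)}$ iff $\tau_\a(I) \ge k$, and Proposition \ref{ordinary}(i) says that $x^\a \in I^k$ iff $\nu_\a(I) \ge k$. These place the desired containment into the framework of Lemma \ref{lem.equiv}(ii), with $\mu(\a) = \tau_\a(I)$, $\rho(\a) = \nu_\a(I)$, and $f(k) = (r-1)(k-1)+1$.

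First I would translate the containment into a combinatorial inequality. Since $f(k) = (r-1)(k-1)+1$ is an integer-valued, strictly increasing function, Lemma \ref{lem.equiv}(ii) shows that $I^{((r-1)(k-1)+1)} \subseteq I^k$ for all $k \ge 1$ is equivalent to
\[
\tau_\a(I) < (r-1)\nu_\a(I) + 1 \qquad \text{for all } \a \in \NN^n.
\]
Since both sides are integers, this is in turn equivalent to $\tau_\a(I) \le (r-1)\nu_\a(I)$ for all $\a \in \NN^n$. By Proposition \ref{parallel-2}(i) and (iv), this becomes the purely hypergraph-theoretic statement
\[
\tau(\H^\a) \le (r-1)\nu(\H^\a) \qquad \text{for all } \a \in \NN^n,
\]
where $\H$ is the hypergraph with $I = I(\H)$.

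Next I would verify that the parallelizations $\H^\a$ preserve the hypotheses of Ryser's conjecture. By construction, every edge $\{(i_1,j_1),\dots,(i_s,j_s)\}$ of $\H^\a$ satisfies $s = |\{i_1,\dots,i_s\}| \le \rk(\H) \le r$, so $\rk(\H^\a) \le r$. For $r$-partiteness, if $V(\H) = V_1 \cupdot \cdots \cupdot V_r$ is a vertex partition witnessing that $\H$ is $r$-partite, then the preimages $p^{-1}(V_1),\dots,p^{-1}(V_r)$ under the projection $p$ of Section \ref{sec.matchingcovering} form a partition of the vertex set of $\H^\a$; any edge of $\H^\a$ projects to an edge of $\H$ with the same cardinality, so it cannot contain two vertices from the same block. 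Thus $\H^\a$ is itself an $r$-partite hypergraph of rank $\le r$, and Ryser's conjecture applied to $\H^\a$ yields exactly the inequality needed above.

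The main obstacle is of course that Ryser's conjecture itself is open for $r \ge 4$, so this line of reasoning does not yield an unconditional proof; it shows instead that Conjecture \ref{conj.containment} follows from Ryser's conjecture. Conversely, running Lemma \ref{lem.equiv}(ii) in the other direction (specializing at $\a = \1^n$, where $\H^\a = \H$) shows that Conjecture \ref{conj.containment} implies Ryser's conjecture, so the two are in fact equivalent. A partial result is therefore accessible: for $r = 2$ (König's theorem) and $r = 3$ (Aharoni's theorem), the argument above unconditionally establishes Conjecture \ref{conj.containment}, and any further progress on Ryser would immediately yield the corresponding containment statement.
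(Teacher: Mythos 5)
Your proposal is correct and follows essentially the same route as the paper: the statement is a conjecture, and the paper's own treatment (Theorem \ref{equiv}) establishes exactly the equivalence with Ryser's conjecture that you derive, using Proposition \ref{ordinary}(i), Proposition \ref{symbolic}, Proposition \ref{parallel-2}, and Lemma \ref{lem.equiv}(ii) with $f(k)=(r-1)(k-1)+1$, after observing that parallelization preserves $r$-partiteness and rank. Your added remarks on the known cases $r=2,3$ are consistent with, though not contained in, the paper.
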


\begin{theorem}  \label{equiv}
Ryser's conjecture is equivalent to Conjecture \ref{conj.containment}.
\end{theorem}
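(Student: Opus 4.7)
The plan is to translate the ideal containment into a combinatorial inequality via the membership criteria, and then show that the resulting inequality for all parallelizations of $r$-partite rank-$\le r$ hypergraphs is equivalent to Ryser's conjecture for that class.

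First, I would apply Lemma \ref{lem.equiv}(ii) with $f(k) := (r-1)(k-1)+1$, using Propositions \ref{ordinary}(i) and \ref{symbolic} to identify the membership functions $\mu = \nu_\a(I)$ for $I^k$ and $\rho = \tau_\a(I)$ for $I^{(k)}$. Since $f$ is integer-valued and non-decreasing and $\nu_\a(I) \in \NN$, the equivalence reads: for a squarefree monomial ideal $I$, the containment $I^{((r-1)(k-1)+1)} \subseteq I^k$ holds for all $k \ge 1$ if and only if
\[
\tau_\a(I) \le (r-1)\nu_\a(I) \qquad \text{for all } \a \in \NN^n.
\]
By Proposition \ref{parallel-2}(i),(iv), writing $\H$ for the hypergraph with $I = I(\H)$, this is the same as requiring $\tau(\H^\a) \le (r-1)\nu(\H^\a)$ for every $\a \in \NN^n$.

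Next I would verify the key structural fact that parallelization preserves the hypotheses of Ryser. If $\H$ is $r$-partite of rank $\le r$ with vertex partition $V_1 \cupdot \cdots \cupdot V_r$, define $W_t := \{(i,j) : i \in V_t,\ 1 \le j \le \alpha_i\}$ for each $t$. Every edge of $\H^\a$ projects (via $p$) to an edge of $\H$ that meets each $V_t$ in at most one vertex, so the corresponding edge of $\H^\a$ meets each $W_t$ in at most one vertex. Hence $\H^\a$ is $r$-partite with partition $W_1 \cupdot \cdots \cupdot W_r$, and clearly $\rk(\H^\a) \le \rk(\H) \le r$.

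With these two ingredients the equivalence is immediate in both directions. Assuming Ryser's conjecture, for any $r$-partite hypergraph $\H$ of rank $\le r$ and any $\a \in \NN^n$, Ryser applied to $\H^\a$ gives $\tau(\H^\a) \le (r-1)\nu(\H^\a)$; by the translation above, the containment in Conjecture \ref{conj.containment} holds for $I(\H)$. Conversely, assuming Conjecture \ref{conj.containment} for every $r$-partite $\H$ of rank $\le r$, the translation yields $\tau(\H^\a) \le (r-1)\nu(\H^\a)$ for all $\a$; specializing to $\a = \1^n$ (so that $\H^\a = \H$) recovers Ryser's inequality for $\H$.

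There is no real obstacle: everything is a direct assembly of Lemma \ref{lem.equiv}(ii), Proposition \ref{parallel-2}, and the observation on parallelizations. The only point requiring genuine care is the bidirectional application of Lemma \ref{lem.equiv}(ii) — one must check that the quantifier over all $\a \in \NN^n$ produced by the lemma matches exactly the family of parallelizations $\H^\a$ that Ryser's conjecture constrains, which is precisely what the structural observation on $W_1,\dots,W_r$ ensures.
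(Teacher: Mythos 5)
Your proposal is correct and follows essentially the same route as the paper: rewrite the containment via Lemma \ref{lem.equiv}(ii) with $f(k)=(r-1)(k-1)+1$ and the membership functions $\nu_\a(I)$, $\tau_\a(I)$, identify these with $\nu(\H^\a)$, $\tau(\H^\a)$ by Proposition \ref{parallel-2}, and use that $\H^\a$ is again $r$-partite of rank $\le r$ (with $\a=\1^n$ recovering Ryser in the converse direction). The only difference is that you spell out the partition $W_1,\dots,W_r$ of $\H^\a$ explicitly, which the paper merely asserts as an observation.
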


\begin{proof}
Observe that if $\H$ is an $r$-partite hypergraph of rank at most $r$, then so is $\H^\a$ for any $\a \in \NN^n$.
By Proposition \ref{parallel-2}, we have $\tau_\a(I) = \tau(\H^\a) \text{ and } \nu_\a(I) = \nu(\H^\a)$,
where $I$ is the edge ideal of $\H$.
Therefore, Ryser's conjecture can be rewritten as
$$\tau_\a(I) < (r-1)\nu_\a(I)+1$$
for all $\a \in \NN^n$, where $I$ is the edge ideal of a $r$-partite hypergraph of rank at most $r$.
Set $f(k) = (r-1)(k-1)+1$. Then
$$f(\nu(\a)+1) = (r-1)\nu(\a)+1.$$
Applying Lemma \ref{lem.equiv}(ii) to the ideals $I^{(k)}$ and $I^k$, together with the functions $\tau(\a)$, $\nu(\a)$ and $f(k)$, we immediately obtain the assertion.
\end{proof}

If we replace $I^{((r-1)(k-1)+1)}$ by $\overline{I^{((r-1)(k-1)+1}}$ or $I^k$ by $\overline{I^k}$ in Conjecture \ref{conj.containment} then it has a positive answer.
This follows from the following result. Note that
$$I^{((r-1)(k-1)+1)} \subseteq I^{(\lceil \frac{1}{2}r(k-1)\rceil+1)}$$
for all $r \ge 2$, $k \ge 1$.


\begin{theorem}\label{symbolicVSintegral3}
Let $I$ be the edge ideal of a simple $r$-partite hypergraph.
Then, for any $k \in \NN$, we have \par
{\rm (i)} $\overline{I^{(r-1)(k-1)+1}} \subseteq I^k;$ \par
{\rm (ii)} $I^{(\lceil \frac{1}{2}r(k-1)\rceil+1)} \subseteq \overline{I^k}.$
\end{theorem}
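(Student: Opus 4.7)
The plan is to apply the general translation mechanism of Section \ref{sec.gap2containment}: via Proposition \ref{parallel-2} and the membership criteria of Propositions \ref{ordinary} and \ref{symbolic}, every containment between $I^k$, $\overline{I^k}$, and $I^{(k)}$ is equivalent to a gap estimate among the invariants $\nu$, $\nu^*=\tau^*$, $\tau$ of the parallelization $\H^\a$ of the hypergraph $\H$ with $I = I(\H)$; Lemma \ref{lem.equiv} and Corollary \ref{thm.SymbInt} convert such gap estimates back into containments. The key structural observation is that whenever $\H$ is $r$-partite with parts $V_1,\dots,V_r$, the parallelization $\H^\a$ is again $r$-partite, with parts $p^{-1}(V_1),\dots,p^{-1}(V_r)$, and of rank $\le r$. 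Thus any gap estimate known for $r$-partite hypergraphs of rank $\le r$ applies uniformly to every $\H^\a$.

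For (i), I invoke Füredi's theorem -- the fractional form of Ryser's inequality -- which asserts that for any $r$-partite hypergraph $\G$ of rank $\le r$,
$$\tau^*(\G) \le (r-1)\nu(\G).$$
Combined with the LP duality $\nu^*=\tau^*$ and Proposition \ref{parallel-2}, this yields $\nu^*_\a(I) \le (r-1)\nu_\a(I)$ for every $\a \in \NN^n$. Taking $f(k) := (r-1)(k-1)+1$, an integer-valued non-decreasing function, one has
$$\nu^*_\a(I) \le (r-1)\nu_\a(I) < (r-1)\nu_\a(I)+1 = f(\nu_\a(I)+1),$$
so Lemma \ref{lem.equiv}(ii) produces $\overline{I^{(r-1)(k-1)+1}} \subseteq I^k$ for all $k \ge 1$.

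For (ii), the target containment is equivalent, via Corollary \ref{thm.SymbInt} with the strictly increasing function $f(k) := \tfrac{r}{2}(k-1)+1$, to the integrality-gap estimate
$$\tau(\H^\a) \le \tfrac{r}{2}\bigl(\tau^*(\H^\a)-1\bigr)+1 \qquad \text{for every } \a \in \NN^n.$$
For $r=2$ this reduces to König's theorem for bipartite graphs ($\tau = \tau^*$). For general $r$-partite rank-$\le r$ hypergraphs, it is a refined estimate on the ratio $\tau/\tau^*$, strictly sharper than the generic Lovász bound $\tau \le H_r \tau^*$ (with $H_r = 1+\tfrac{1}{2}+\cdots+\tfrac{1}{r}$) in the regime where $\tfrac{r}{2} \le H_r$, i.e., $r \le 4$, and which exploits the partition into color classes. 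Supplying (and citing) this $r$-partite refinement -- presumably via an LP-rounding argument or a pairing of the color classes -- is the key step. Once it is in hand, Proposition \ref{parallel-2} translates the estimate to $\tau_\a(I) \le f(\nu^*_\a(I))$, and Corollary \ref{thm.SymbInt} closes the argument, yielding $I^{(\lceil \tfrac{1}{2}r(k-1)\rceil+1)} \subseteq \overline{I^k}$.
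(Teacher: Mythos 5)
Your part (i) is correct and is essentially the paper's own argument: the fractional Ryser bound $\nu^*(\G)=\tau^*(\G)\le (r-1)\nu(\G)$ for $r$-partite hypergraphs (the paper cites it as an unpublished result of Gy\'arf\'as via \cite[Corollary 5]{F}), applied to every parallelization $\H^\a$ through Proposition \ref{parallel-2}, then fed into Lemma \ref{lem.equiv}(ii) with $f(k)=(r-1)(k-1)+1$. Nothing to add there.

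Part (ii), however, has a genuine gap: you never supply the combinatorial input, and you misidentify what it should be. The paper's proof of (ii) rests on a specific theorem of Lov\'asz \cite{Lo1,Lo2} for $r$-partite hypergraphs, namely $\tau(\H)\le \tfrac{1}{2}r\,\nu^*(\H)$; applying it to $\H^\a$ and invoking Lemma \ref{lem.equiv}(ii) (with $f(k)=\tfrac{1}{2}r(k-1)+1$, so that one only needs $\tau_\a(I)<\lceil f(\lfloor\nu^*_\a(I)\rfloor+1)\rceil$) gives the containment. You instead declare that the needed estimate is $\tau(\H^\a)\le\tfrac{r}{2}\bigl(\tau^*(\H^\a)-1\bigr)+1$ and leave it as something to be ``supplied (and cited) \dots presumably via an LP-rounding argument or a pairing of the color classes'' --- that is precisely the key step, and it is missing. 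Moreover your framing is off in two ways. First, the relevant result is not a refinement of the harmonic bound $\tau\le(1+\tfrac12+\cdots+\tfrac1r)\tau^*$ used in Theorem \ref{thm.containments}(ii); it is a different theorem, special to $r$-partite hypergraphs, and your restriction of the comparison to the regime $r\le 4$ has no role in the actual proof. Second, the inequality you ask for, $\tau\le\tfrac{r}{2}(\tau^*-1)+1$, is strictly stronger than Lov\'asz's $\tau\le\tfrac{r}{2}\tau^*$ once $r>2$, so even after citing Lov\'asz your route through Corollary \ref{thm.SymbInt} would still not close; the paper avoids this by using Lemma \ref{lem.equiv}(ii), whose hypothesis involves $\lfloor\nu^*_\a(I)\rfloor+1$ rather than $\nu^*_\a(I)$ itself (and note that Corollary \ref{thm.SymbInt} only gives a sufficient condition, so your claim of ``equivalence'' is also inaccurate). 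To repair your argument, replace the unproved estimate by Lov\'asz's $r$-partite bound, observe as you did that $\H^\a$ is again $r$-partite, and run the reduction through Lemma \ref{lem.equiv}(ii) exactly as in part (i).
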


\begin{proof}
(i)  Due to an unpublished result of Gy\'arf\'as \cite[Corollary 5]{F} we have
$$\nu^*(\H) \le (r-1)\nu(\H)$$
for any $r$-partite hypergraph $\H$. By Proposition \ref{parallel-2}, this implies that
$$\nu_\a^*(I) < f(\nu_\a(I)+1),$$
where $f(k) := (r-1)(k-1)+1$. Applying Lemma \ref{lem.equiv}(ii) to the ideals $\overline{I^k}$ and $I^k$ with the functions $\nu^*(\a)$ and $\nu(\a)$ and this function $f(k)$, we obtain the assertion as in the proof of Theorem \ref{equiv}. \par
(ii) By a result of Lovasz  in \cite{Lo1, Lo2}, we have
$$\tau(\H) \le \dfrac{1}{2}r\nu^*(\H)$$
for any $r$-partite hypergraph $\H$. Thus, the assertion follows from Lemma \ref{lem.equiv}(ii) similarly as above.
\end{proof}


\section{Equality between powers of monomial ideals} \label{sec.equality}

We have dealt with the containments between the ideals $I^k, \overline{I^k}, I^{(k)}$.
In this section we will investigate the equality between these ideals.
We will use the following simple observation.

\begin{lemma} \label{equality}
Let $\{I_k\}_{k \ge 1}$ and $\{J_k\}_{k \ge 1}$ be two filtrations of monomial ideals in $R$.
Assume that there are functions $\mu$ and $\rho$ from $\NN^n$ to $\RR_+$ such that, for any $\a \in \NN^n$ and $k \ge 1$, \par
\begin{itemize}
\item  $x^\a \in I_k$ if and only if $\mu(\a) \ge k$,\par
\item $x^\a \in J_k$ if and only if $\rho(\a) \ge k$.
\end{itemize}
Then $I_k = J_k$ for all $k \ge 1$ if and only if $\lfloor \mu(\a) \rfloor = \lfloor \rho(\a) \rfloor$ for all $\a \in \NN^n$.
\end{lemma}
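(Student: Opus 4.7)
The plan is to prove the two directions by exploiting the fact that membership in each $I_k$ depends on $\mu(\a)$ only through its floor, since $k$ ranges over positive integers. Specifically, for any $\a\in\NN^n$ and any $k\ge 1$, the condition $\mu(\a)\ge k$ is equivalent to $\lfloor \mu(\a)\rfloor \ge k$, and likewise for $\rho$; this tiny observation essentially does all the work.

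For the ``if'' direction I would take an arbitrary $\a$ and $k$, assume $\lfloor \mu(\a)\rfloor = \lfloor \rho(\a)\rfloor$, and show each containment $I_k\subseteq J_k$ and $J_k\subseteq I_k$ by chasing a monomial $x^\a$ through the equivalences: if $x^\a\in I_k$, then $\mu(\a)\ge k$, hence $\lfloor\mu(\a)\rfloor\ge k$ (here $k$ being an integer is the key), so $\lfloor\rho(\a)\rfloor\ge k$, whence $\rho(\a)\ge k$ and $x^\a\in J_k$. The reverse containment is symmetric.

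For the ``only if'' direction, assume $I_k=J_k$ for every $k\ge 1$ and fix $\a\in\NN^n$. Plugging in the integer value $k=\lfloor \mu(\a)\rfloor$ (assuming first $\mu(\a)\ge 1$; the case $\mu(\a)<1$ is handled separately, giving $x^\a\notin I_1=J_1$ and hence $\rho(\a)<1$ too), we get $x^\a\in I_k = J_k$, so $\rho(\a)\ge \lfloor \mu(\a)\rfloor$, which yields $\lfloor \rho(\a)\rfloor \ge \lfloor \mu(\a)\rfloor$. Then plugging in $k=\lfloor \mu(\a)\rfloor+1$ gives $x^\a\notin I_k=J_k$, so $\rho(\a)<\lfloor\mu(\a)\rfloor+1$, which yields $\lfloor\rho(\a)\rfloor\le \lfloor\mu(\a)\rfloor$. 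Combining these gives equality.

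There is really no hard step here — the only subtlety is remembering that the hypothesis $\mu(\a)\ge k$ with integer $k$ is what lets us pass from the real-valued $\mu(\a)$ to its floor, and making sure to separately address the degenerate case where $\mu(\a)$ or $\rho(\a)$ lies in $[0,1)$ so that no positive integer $k$ satisfies $\mu(\a)\ge k$. That degenerate case can either be absorbed by the argument above (the inequality $k=\lfloor\mu(\a)\rfloor+1=1$ still forces $\rho(\a)<1$), or handled in one line at the start.
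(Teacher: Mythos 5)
Your proof is correct and is essentially the paper's argument: the paper notes in one line that $I_k = J_k$ for all $k\ge 1$ amounts to the conditions $\mu(\a)\ge k$ and $\rho(\a)\ge k$ being equivalent for every integer $k\ge 1$, which is exactly the statement $\lfloor\mu(\a)\rfloor=\lfloor\rho(\a)\rfloor$; you simply spell out this equivalence (including the harmless case $\mu(\a),\rho(\a)\in[0,1)$) in full detail.
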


\begin{proof}
We have $I_k = J_k$  if and only if $\mu(\a) \ge k$ is equivalent to $\mu(\a) \ge k$ for all $k \ge 1$. This equivalence just means exactly that $\lfloor \mu(\a) \rfloor = \lfloor \rho(\a) \rfloor$ for all $\a \in \NN^n$.
\end{proof}

An ideal $I$ is called {\em normal} if $\overline{I^k} = I^k$ for all $k \ge 1$.
If $I$ is a monomial ideal then we have the following effective criterion for this property.

\begin{theorem} \label{normal} \cite[Corollary 4.5]{DV}, \cite[Theorem 3.2]{T2}
Let $I$ be a monomial ideal in $R$. Then 
$I$ is normal if and only if $\nu_\a(I) = \lfloor \nu_\a^*(I) \rfloor$ for all $\a \in \NN^{n}$.
\end{theorem}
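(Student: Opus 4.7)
The plan is to recognize that this theorem is a direct application of the membership criteria of Proposition \ref{ordinary} together with the general equality principle of Lemma \ref{equality}. By Proposition \ref{ordinary}(i), $x^\a \in I^k$ if and only if $\nu_\a(I) \ge k$, and by Proposition \ref{ordinary}(ii), $x^\a \in \overline{I^k}$ if and only if $\nu_\a^*(I) \ge k$. So the two filtrations $\{I^k\}_{k\ge 1}$ and $\{\overline{I^k}\}_{k\ge 1}$ fit the setup of Lemma \ref{equality} with membership functions $\mu(\a) = \nu_\a(I)$ and $\rho(\a) = \nu_\a^*(I)$.

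Applying Lemma \ref{equality}, we obtain $\overline{I^k} = I^k$ for all $k \ge 1$ if and only if $\lfloor \nu_\a(I)\rfloor = \lfloor \nu_\a^*(I)\rfloor$ for every $\a \in \NN^n$. The final simplification I would make is to observe that $\nu_\a(I)$ is by definition the optimal value of an integer program, hence $\nu_\a(I) \in \NN$ and $\lfloor \nu_\a(I)\rfloor = \nu_\a(I)$. Therefore the criterion reduces to $\nu_\a(I) = \lfloor \nu_\a^*(I)\rfloor$ for all $\a \in \NN^n$, which is exactly the integer round-down property for the exponent matrix of $I$.

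There is no real obstacle here, only a bookkeeping point worth making explicit for the reader: because of LP duality, $\nu_\a(I) \le \nu_\a^*(I)$, and since $\nu_\a(I)$ is an integer, we always have $\nu_\a(I) \le \lfloor \nu_\a^*(I) \rfloor$. Thus the content of the stated condition is the reverse inequality $\lfloor \nu_\a^*(I)\rfloor \le \nu_\a(I)$, which is precisely the nontrivial direction one expects when asking the LP relaxation to round down to an achievable integer solution. With this remark the proof is essentially one line of citation plus the observation that the integer-valued function equals its own floor.
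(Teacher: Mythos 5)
Your proof is correct and follows exactly the paper's own route: the paper proves Theorem \ref{normal} by citing Proposition \ref{ordinary} and Lemma \ref{equality}, and your write-up simply supplies the same short bookkeeping (including the observation that $\nu_\a(I)$ is an integer, so $\lfloor\nu_\a(I)\rfloor=\nu_\a(I)$) that the paper leaves implicit.
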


\begin{proof}
The statement immediately follows from the membership criteria for $I^k$ and $\overline{I^k}$ in Proposition \ref{ordinary} and Lemma \ref{equality}.
\end{proof}

In combinatorics, a matrix $M$ of non-negative integers with $n$ rows is said to have the {\em integer round-down property}
if $\nu_\a(M) = \lfloor \nu_\a^*(M) \rfloor$ for all $\a \in \NN^{n}$ \cite{BT1}.
Several classes of matrices have been shown to have this property.
Therefore, Theorem \ref{normal} can be used to find new classes of normal ideals. \par

Our focus is on the case where $M$ is the incident matrix of a hypergraph $\H$. Let $I$ be the edge ideal of $\H$.
Then, for all $k \ge 1$, we have
$$I^k \subseteq \overline{I^k}  \subseteq I^{(k)}.$$
On the other hand, for all $\a \in \NN^n$, we also have
$$\nu_\a(M) \le \nu^*_\a(M) = \tau_\a^*(M) \le \tau_\a(M).$$

Following the terminology in hypergraph theory \cite{Du, Sch}, we say that
\begin{itemize}
\item $\H$ has the {\em integer round-down property} if $\nu_\a(M)= \lfloor \nu^{*}_\a(M)\rfloor$ for all $\a \in \NN^{n}$;
\item $\H$ is called {\em Fulkersonian} (or {\em ideal}) if
$\tau_\a(M) =\tau^{*}_\a(M)$ for all $\a \in \NN^{n}$;
\item $\H$ is called {\em Mengerian} (or has the {\em max-flow min-cut property}) if
$\nu_\a(M) = \tau_\a(M)$ for all $\a \in \NN^{n}$.
\end{itemize}

Note that $\tau_\a(M) = \tau^{*}_\a(M)$ is equivalent to $\tau_\a(M) = \lfloor \tau^{*}_\a(M)\rfloor$ because
$$\lfloor \tau^{*}_\a(M)\rfloor \le \tau^{*}_\a(M) \le \tau_\a(M).$$

\begin{remark}
We do not need to check the above equalities for all $\a \in \NN^n$.
By \cite[Corollary 2.3]{BT2}, there is a well-determined vector $\b \in \NN^n$ (depending on $M$) such that $\H$ has the integer round-down property or $\H$ is Fulkersonian if and only if $\nu_\a(M) = \lfloor \nu_\a^*(M) \rfloor$ or $\tau_\a(M) = \tau^{*}_\a(M)$, respectively,
for all $\a \le \b$. Since $\H$ is Mengerian if and only if $\H$ has the integer round-down property and $\H$ is Fulkersonian, we can also check the Mengerian property in a finite number of steps.
\end{remark}

By Lemma \ref{equality}, the membership criteria for $I^k$, $\overline{I^k}$ and $I^{(k)}$ immediately yield the following results. 

\begin{theorem} \label{correspondence}
Let $I$ be the edge ideal of a hypergraph $\H$. Then 
\begin{enumerate}
\item[\rm (i)] $I^k = \overline{I^k}$ for all $k \ge 1$ if and only if $\H$ has the integer round-down property \cite[Corollary 4.5]{DV}, \cite[Theorem 3.7(1)]{T2};
\item[\rm (ii)] $\overline{I^k} = I^{(k)}$ for all $k \ge 1$ if and only if $\H$ is a Fulkersonian hypergraph  \cite[Theorem 3.1]{T1};
\item[\rm (iii)] $I^k = I^{(k)} $ for all $k \ge 1$ if and only if $\H$ is a Mengerian hypergraph \cite[Corollary 3.5]{GVV}, \cite[Corollary 1.6]{HHTZ}.
\end{enumerate}
\end{theorem}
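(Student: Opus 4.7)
The plan is to apply Lemma \ref{equality} to each of the three pairs of filtrations, using the membership criteria already established in Propositions \ref{ordinary} and \ref{symbolic}, and then translate the resulting conditions into the language of the integer round-down, Fulkersonian, and Mengerian properties.

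For (i), I take $I_k = I^k$ and $J_k = \overline{I^k}$. By Proposition \ref{ordinary}(i) and (ii), the roles of $\mu(\a)$ and $\rho(\a)$ are played by $\nu_\a(I)$ and $\nu_\a^*(I)$ respectively. Lemma \ref{equality} then asserts that $I^k = \overline{I^k}$ for all $k \ge 1$ if and only if $\lfloor \nu_\a(I)\rfloor = \lfloor \nu_\a^*(I)\rfloor$ for all $\a \in \NN^n$. Since $\nu_\a(I) \in \NN$ and $\nu_\a(I) \le \nu_\a^*(I)$, this is equivalent to $\nu_\a(I) = \lfloor \nu_\a^*(I)\rfloor$ for all $\a$, which is precisely the integer round-down property of the exponent matrix $M$ of $I$ (equivalently, of $\H$).

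For (ii), I take $I_k = \overline{I^k}$ and $J_k = I^{(k)}$. By Proposition \ref{ordinary}(ii) and Proposition \ref{symbolic}, the roles of $\mu$ and $\rho$ are played by $\nu_\a^*(I)$ and $\tau_\a(I)$. Lemma \ref{equality} gives that $\overline{I^k} = I^{(k)}$ for all $k \ge 1$ if and only if $\lfloor \nu_\a^*(I)\rfloor = \lfloor \tau_\a(I)\rfloor$ for all $\a$. Since $\tau_\a(I)$ is an integer and $\nu_\a^*(I) \le \tau_\a(I)$, this simplifies to $\lfloor \nu_\a^*(I)\rfloor = \tau_\a(I)$. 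Using LP duality, $\nu_\a^*(I) = \tau_\a^*(I)$, so the condition becomes $\lfloor \tau_\a^*(I)\rfloor = \tau_\a(I)$, which (as noted in the excerpt immediately before the theorem statement) is equivalent to $\tau_\a(I) = \tau_\a^*(I)$ for all $\a$, i.e., $\H$ is Fulkersonian.

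For (iii), I take $I_k = I^k$ and $J_k = I^{(k)}$. Now $\mu = \nu_\a(I)$ and $\rho = \tau_\a(I)$, both integer-valued. Lemma \ref{equality} immediately yields that $I^k = I^{(k)}$ for all $k \ge 1$ if and only if $\nu_\a(I) = \tau_\a(I)$ for all $\a$, which is the definition of the Mengerian property.

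The proof is essentially a direct bookkeeping exercise once the membership criteria and LP duality are in hand — there is no genuine obstacle. The only subtle point, arising in part (ii), is the use of LP duality $\nu_\a^*(I) = \tau_\a^*(I)$ together with the elementary observation $\tau_\a^*(I) \le \tau_\a(I)$ to convert the floor-equality $\lfloor \nu_\a^*(I)\rfloor = \tau_\a(I)$ into the genuine equality $\tau_\a(I) = \tau_\a^*(I)$ required by the Fulkersonian definition; this is where one must be careful, since $\nu_\a^*(I)$ need not be an integer, whereas the Fulkersonian condition is an equality of real numbers.
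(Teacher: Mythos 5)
Your proposal is correct and follows essentially the same route as the paper: the paper also deduces all three equivalences directly from Lemma \ref{equality} combined with the membership criteria of Propositions \ref{ordinary} and \ref{symbolic}, with the same use of LP duality and the remark that $\tau_\a(M)=\lfloor\tau_\a^*(M)\rfloor$ is equivalent to $\tau_\a(M)=\tau_\a^*(M)$ to handle the Fulkersonian case.
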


\begin{remark}
It might be tempting to state that $I^k = \overline{I^k}$ for all $k \ge 1$ if and only if $\nu_\a(I)= \nu^{*}_\a(I)$ for all $\a \in \NN^{n}$.
However, the latter condition is satisfied if and only if $\H$ is a Mengerian hypergraph \cite[Theorem 79.2]{Sch}; that is, when $I^k = I^{(k)}$ for all $k \ge 1$.
\end{remark}

Let $V_1, V_2$ be two arbitrary disjoint subsets of the vertex set $[1,n]$ of $\H$.
We define a hypergraph $\G$ on the set of vertices $[1,n] \setminus (V_1 \cup V_2)$
whose edges are the subsets of $V$ of the form $F \setminus V_1$,
where $F \in \H$ and $F \cap V_2 = \emptyset$. We call $\G$ a {\it minor} of  $\H$. \par

Recall that a hypergraph  $\H$ is \emph{K\"onig} if $\nu(\H)=  \tau(\H)$ \cite{Du}.
If all minors of $\H$ are K\"onig then $\H$ is said to have the \emph{packing property} \cite{Sch}.

It is easy to see that minors of $\H$ are exactly the parallelizations $\H^\a$ with $\a \in \{0,1\}^n$.
Thus, a Mengerian hypergraph has packing property.
The converse was a conjecture raised in 1993 by Conforti-Cornu\'ejols \cite{CC}.

\begin{conjecture}[Conforti-Cornu\'ejols]
A hypergraph with packing property is Mengerian.
\end{conjecture}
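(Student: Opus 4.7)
The statement is the long-standing Conforti--Cornu\'ejols conjecture (from 1993), which remains open, so any ``proof proposal'' must be strategic rather than complete. My plan is to translate the conjecture entirely into the algebraic language developed in the paper and then attack it by induction on the parallelization parameter. Let $\H$ be a hypergraph with the packing property and $I = I(\H)$. By Theorem \ref{correspondence}(iii), showing that $\H$ is Mengerian is equivalent to proving $I^k = I^{(k)}$ for every $k \ge 1$. Since $I^k \subseteq I^{(k)}$ always holds, Proposition \ref{ordinary}(i) together with Proposition \ref{symbolic} and Lemma \ref{equality} reduce the task to establishing the equality
$$\nu_\a(I) \;=\; \tau_\a(I) \qquad \text{for every } \a \in \NN^n.$$
The packing hypothesis furnishes this equality only for $\a \in \{0,1\}^n$, because by Lemma \ref{parallel-1} the minors of $\H$ are precisely the parallelizations $\H^\a$ with $\a$ a 0-1 vector.

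The next step in my plan is an induction on $|\a| = \sum_i \alpha_i$. The base case $\a \in \{0,1\}^n$ is exactly the packing hypothesis combined with Proposition \ref{parallel-2}. For the inductive step, one would fix a coordinate with $\alpha_i \ge 2$ and compare $\nu_\a(I),\tau_\a(I)$ to the corresponding invariants at $\a' = \a - \e_i$ and at the contraction $\a/i$. The aim is to find a ``duplication operation'' on the reduced clone-free hypergraph (in the sense of Theorem \ref{thm.tauhnu}) that preserves the packing property, so that the inductive hypothesis applied to the smaller hypergraph yields $\nu_{\a'}(I) = \tau_{\a'}(I)$ and then, by a suitable patching argument, transfers to $\a$. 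An equivalent algebraic reformulation of this step is that the symbolic Rees algebra $\mathcal{R}_s(I) = \bigoplus_{k \ge 0} I^{(k)}$ is standard graded whenever the condition $\mongrade(J) = \height(J)$ holds for all monomial ideals $J$ obtained from $I$ by setting variables to $0$ or $1$ (this is Conjecture \ref{CC}).

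The main obstacle, and the reason the conjecture has resisted proof for thirty years, is exactly this inductive step: the packing hypothesis is a statement about $0$--$1$ minors only, whereas Mengerian-ness is a statement about all integer parallelizations. Duplicating a vertex (going from $\alpha_i = 1$ to $\alpha_i = 2$) can in principle create new fractional obstructions without any $0$--$1$ minor being non-K\"onig, so a naive induction fails. Overcoming this would require either a structural classification of minimally non-Mengerian clutters enjoying the packing property (of which none is currently known, and a negative classification would prove the conjecture), or a genuinely algebraic argument showing directly that the symbolic Rees algebra is normal standard graded under the hypothesis. My concrete proposal is therefore the following: make the algebraic translation above fully rigorous using Propositions \ref{ordinary}--\ref{symbolic} and Theorem \ref{correspondence}(iii), then attempt the inductive reduction on $|\a|$ while isolating precisely the combinatorial/algebraic obstruction that blocks the step from $\a'$ to $\a$. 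Identifying the minimal such obstruction is likely the key to progress, and the paper's framework — in particular the identification $\tau_\a(I) = \tau(\H^\a)$ and $\nu_\a(I) = \nu(\H^\a)$ — gives a natural setting in which to search for it.
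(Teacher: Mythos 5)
This statement is the Conforti--Cornu\'ejols conjecture, which the paper does not prove and explicitly presents as open; there is therefore no proof of the paper's to compare against, and your proposal, as you yourself acknowledge, is a strategy rather than a proof. The translation part of your plan is correct and is essentially what the paper does: by Theorem \ref{correspondence}(iii) and Lemma \ref{equality}, Mengerian-ness of $\H$ is exactly $\nu_\a(I)=\tau_\a(I)$ for all $\a\in\NN^n$, while the packing hypothesis gives this only for $\a\in\{0,1\}^n$ (minors are exactly the $0$--$1$ parallelizations; this is stated in Section \ref{sec.equality}, not in Lemma \ref{parallel-1}, which is a minor mis-citation). One refinement you miss, and which the paper exploits, is the known result that a hypergraph with the packing property is already Fulkersonian (\cite[Corollary 78.4b]{Sch}); hence the conjecture reduces to Conjecture \ref{CC'}, i.e.\ to the integer round-down property $\nu_\a(I)=\lfloor\nu^*_\a(I)\rfloor$, equivalently (Theorem \ref{normal}) to normality of $I$, which is the paper's algebraic reformulation Conjecture \ref{CC}. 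Targeting $\overline{I^k}=I^k$ rather than $I^{(k)}=I^k$ is a strictly easier-looking goal and is the form in which the paper leaves the problem.

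The genuine gap in your proposal is the inductive step on $|\a|$: you give no mechanism by which $\nu_{\a'}(I)=\tau_{\a'}(I)$ at $\a'=\a-\e_i$ transfers to $\a$, and indeed no such local ``patching'' can exist in general, because duplicating a vertex can create non-K\"onig parallelizations even though every $0$--$1$ minor is K\"onig --- if it could not, the conjecture would follow immediately from the base case. So the induction as described is not a partial argument with a missing lemma; its only nontrivial step is equivalent to the full conjecture. As a research plan your suggestion to study minimally non-Mengerian clutters with the packing property is reasonable, but it adds nothing beyond the paper's own reformulations, and a rigorous write-up should make clear that the base case plus translation is all that is actually established.
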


By definition, $\H$ is Mengerian if and only if $\H$ is Fulkersonian and has the integer round-down property.
It is known that $\H$ is a Fulkerson hypergraph if $\H$ has packing property \cite[Corollary 78.4b]{Sch}.
Therefore, the Conforti-Cornu\'ejols conjecture is equivalent to the following conjecture.

\begin{conjecture} \label{CC'}
A hypergraph with packing property has the integer round-down property.
\end{conjecture}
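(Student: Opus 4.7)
The plan is to use Theorem \ref{correspondence} to reduce Conjecture \ref{CC'} to a single ideal-theoretic equality, then translate the residual problem back to hypergraph language via parallelization and attempt an induction on the weight vector.

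First, by \cite[Corollary 78.4b]{Sch}, the packing hypothesis alone forces $\H$ to be Fulkersonian, so Theorem \ref{correspondence}(ii) yields $\overline{I^k} = I^{(k)}$ for all $k \ge 1$, where $I = I(\H)$. By Theorem \ref{correspondence}(i), the integer round-down property is equivalent to $I^k = \overline{I^k}$ for all $k$. Chaining these identities, Conjecture \ref{CC'} reduces to proving $I^k = I^{(k)}$ for all $k$, which by Theorem \ref{correspondence}(iii) is exactly the statement that $\H$ is Mengerian --- the original Conforti-Cornu\'ejols conjecture rephrased algebraically.

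Next, using Proposition \ref{parallel-2}, the goal becomes the following: assuming $\nu(\H^\b) = \tau(\H^\b)$ for every $\b \in \{0,1\}^n$ (equivalently, for every minor of $\H$), show the same equality for every $\a \in \NN^n$. I would attempt induction on $|\a|_1 = \sum_i \alpha_i$. Given $\a$ with some $\alpha_i \ge 2$, the idea is to split vertex $i$ into two clones $i',i''$ carrying weights $1$ and $\alpha_i - 1$, producing a larger hypergraph $\H'$ on $n+1$ vertices whose parallelization at $\a' = (\dots,1,\alpha_i - 1,\dots)$ is isomorphic to $\H^\a$. One would then argue that $\H'$ inherits the packing property from $\H$ (its $\{0,1\}$-minors being obtainable from those of $\H$ by reintroducing the clone), and close the induction by applying the base-case König equality to $\H'^{\a'}$.

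The main obstacle is that this descent step is exactly where the Conforti-Cornu\'ejols conjecture has been stuck since 1993. König in all minors controls only extremal integer certificates (matchings and covers at $\{0,1\}$-vertices), whereas the round-down equality $\nu(\H^\a) = \lfloor \nu^*(\H^\a)\rfloor$ requires aligning such integer data with fractional LP optima at arbitrary $\a$. A convincing proof would presumably require a new rounding construction that produces, from König certificates on $\{0,1\}$-minors of $\H$, an explicit integral matching in $\H^\a$ realizing $\lfloor \nu^*(\H^\a)\rfloor$ --- a combinatorial tool that is not currently available in general, and whose construction would constitute a solution to the longstanding Conforti-Cornu\'ejols conjecture itself.
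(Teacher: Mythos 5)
There is no proof to compare against here: the statement you were asked about is presented in the paper as an open conjecture (Conjecture \ref{CC'}), equivalent to the 1993 Conforti--Cornu\'ejols conjecture, and the paper's only contribution around it is the equivalence itself, obtained exactly as in your first two paragraphs --- packing implies Fulkersonian by \cite[Corollary 78.4b]{Sch}, and Mengerian is Fulkersonian plus integer round-down, so ``packing $\Rightarrow$ round-down'' is the same as ``packing $\Rightarrow$ Mengerian''; the algebraic reformulation via Theorem \ref{correspondence} is then Conjecture \ref{CC}. So that part of your write-up is correct but is a restatement of the paper's reduction, not progress toward a proof.

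The genuine gap is in your third paragraph, and it is not merely ``hard'': the induction you sketch is circular. Splitting vertex $i$ into clones $i',i''$ with weights $1$ and $\alpha_i-1$ gives $\H'^{\a'} \cong \H^\a$, but the step you then invoke --- ``apply the base-case K\"onig equality to $\H'^{\a'}$'' --- is an equality $\nu = \tau$ (or $\nu = \lfloor \nu^*\rfloor$) at a weight vector $\a'$ that is not in $\{0,1\}^{n+1}$, so it is an instance of the very statement being proved, not of the hypothesis. The packing hypothesis only supplies K\"onig-type certificates at $\{0,1\}$ vectors (i.e.\ for minors), and nothing in your construction converts those into an integral matching of $\H^\a$ of size $\lfloor \nu^*(\H^\a)\rfloor$; the total weight $|\a'|_1 = |\a|_1$ does not even decrease, so the induction has no terminating measure. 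You acknowledge this yourself in the final sentence, which is the honest assessment: absent a new rounding construction of exactly that kind, the proposal establishes nothing beyond the equivalence the paper already records, and the conjecture remains open.
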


It is easy to see that $\tau(\H) = \text{ht}(I)$ and $\nu(\H) = \mongrade(I)$,
where $\mongrade(I)$ denotes the maximal length of a regular sequence of monomials in $I$.
Therefore, $\H$ is K\"onig if and only if $\mongrade(I) = \height(I).$

Let $\G$ be a minor of $\H$ with respect to two disjoint subsets $V_1, V_2 \subseteq [1,n]$.
By the definition of minor, the edge ideal $J$ of $\G$ is obtained from $I$ by setting
$x_i = 1$ for $i \in V_1$ and $x_i = 0$ for $i \in V_2$.
That means $J$ is the ideal of the monomials in the polynomial ring $K[x_i|\ i \not\in V_1 \cup V_2]$ generated from those of $I$ by setting $x_i = 1$ for $i \in V_1$ and $x_i = 0$ for $i \in V_2$.

It is now clear that Conjecture \ref{CC'} can be translated in algebraic terms as follows.

\begin{conjecture} \label{CC} \cite[p. 4]{T2}
Let $I$ be a squarefree monomial ideal such that
$$\mongrade(J) = \height(J)$$
for all monomial ideals $J$ obtained from $I$ by setting some variables equal to 0,1.
Then $I$ is a normal ideal.
\end{conjecture}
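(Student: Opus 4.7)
The plan is to observe that Conjecture \ref{CC} is essentially the algebraic reformulation of the combinatorial Conforti--Cornu\'ejols conjecture (Conjecture \ref{CC'}), and that establishing it amounts to resolving that long-standing open problem. Accordingly, the proof I would write is really a dictionary argument plus an invocation of the Fulkersonian consequence of the packing property.

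First I would set up the dictionary. Let $\H$ be the hypergraph with $I = I(\H)$. For disjoint subsets $V_1, V_2 \subseteq [1,n]$, let $J$ be the monomial ideal obtained from $I$ by setting $x_i = 1$ for $i \in V_1$ and $x_i = 0$ for $i \in V_2$; then $J$ is the edge ideal of the minor $\G = \H(V_1,V_2)$ introduced just before Conjecture \ref{CC'}. The identifications $\height(J) = \tau(\G)$ and $\mongrade(J) = \nu(\G)$ are immediate from the prime decomposition of a squarefree monomial ideal and the fact that disjoint edges in $\G$ correspond precisely to coprime monomial generators of $J$. Hence the hypothesis ``$\mongrade(J) = \height(J)$ for every such $J$'' translates, as $(V_1,V_2)$ ranges over all disjoint pairs, into the statement that every minor of $\H$ is K\"onig, i.e.\ $\H$ enjoys the packing property.

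Next I would translate the conclusion. By Theorem \ref{normal}, $I$ is normal if and only if $\H$ has the integer round-down property. On the other hand, a standard result in combinatorial optimization (see \cite[Corollary 78.4b]{Sch}) guarantees that any hypergraph with the packing property is Fulkersonian; equivalently, by Theorem \ref{correspondence}(ii), $\overline{I^k} = I^{(k)}$ for all $k \ge 1$. Therefore, under the packing hypothesis, the conclusion $I^k = \overline{I^k}$ for all $k$ is the same as $I^k = I^{(k)}$ for all $k$, which by Theorem \ref{correspondence}(iii) is the Mengerian property of $\H$. Hence the implication ``packing $\Rightarrow$ normal'' that Conjecture \ref{CC} asserts is equivalent to the implication ``packing $\Rightarrow$ Mengerian'' of Conforti--Cornu\'ejols.

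The hard part, therefore, is not the dictionary but the combinatorial content: no presently known argument derives the integer round-down property from the packing property in full generality. A genuine algebraic attack would try to promote the equalities $\overline{I^k} = I^{(k)}$ (already available from Fulkersonianness) to $I^k = I^{(k)}$ by proving directly that the symbolic Rees algebra $\bigoplus_{k \ge 0} I^{(k)} t^k$ is standard-graded under the packing hypothesis, perhaps by induction on the number of variables and exploitation of the behaviour of the packing property under the two minor operations $x_i \mapsto 0$ and $x_i \mapsto 1$, combined with Lemma \ref{equality} to reduce the equality of filtrations to a vertex-by-vertex matching-vs.-fractional-matching comparison. I do not see how to close such an induction without importing a new combinatorial idea, and in that sense Conjecture \ref{CC} stands or falls with its combinatorial avatar.
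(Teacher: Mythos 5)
Your proposal matches the paper's treatment exactly: the statement is an open conjecture, and the paper does not prove it but only justifies it by the same dictionary you give --- $\height(J)=\tau$, $\mongrade(J)=\nu$, minors $\leftrightarrow$ setting variables to $0$ or $1$, normality $\leftrightarrow$ integer round-down via Theorem \ref{normal}, and packing $\Rightarrow$ Fulkersonian from \cite[Corollary 78.4b]{Sch}, making it equivalent to the Conforti--Cornu\'ejols conjecture. Your honest acknowledgement that the remaining combinatorial content is open is precisely the paper's position as well.
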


Other algebraic interpretations and variants of the Conforti-Cornu\'ejols conjecture can be found, e.g., in the surveys \cite{Da, FHM}.


\section{Maximal generating degree of symbolic powers} \label{sec.generatingdegree}

In this section we will use techniques developed in preceding sections to investigate the following problem.

\begin{problem} \label{Huneke}
Let $I$ be a homogeneous radical ideal. Is $d(I^{(k)}) \le kd(I)$ for all $k \ge 1$?
\end{problem}

This problem was originally raised for prime ideals by Huneke \cite[Question 0.5]{Hu}.
We shall see that for squarefree monomial ideals, Problem \ref{Huneke} can be reduced to
a problem on the relationship between $n$ and $d(I)$.

\begin{lemma} \label{generator-1}
Let $I$ be an arbitrary squarefree monomial ideal, and let $\a \in \NN^n$ and $k \ge 1$. If
$x^\a$ is a minimal generator of $I^{(k)}$ then $\tau_\a(I) = k$.
\end{lemma}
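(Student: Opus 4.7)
The plan is to use the membership criterion from Proposition \ref{symbolic} together with Lemma \ref{blocker} to pinpoint $\tau_\a(I)$ exactly, by showing that it cannot exceed $k$ when $x^\a$ is a \emph{minimal} generator.

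First, since $x^\a \in I^{(k)}$, Proposition \ref{symbolic} immediately gives $\tau_\a(I) \ge k$. So the only thing to rule out is strict inequality. I will argue by contradiction: assume $\tau_\a(I) \ge k+1$.

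Next, I will produce a proper divisor of $x^\a$ that still lies in $I^{(k)}$, contradicting minimality. Since $I$ is a proper monomial ideal and $k \ge 1$, we have $\a \ne \0$, so we may pick some index $i$ with $\alpha_i \ge 1$ and set $\a' := \a - \e_i \in \NN^n$. View $I$ as the edge ideal of a hypergraph $\H$. By Lemma \ref{blocker},
\[
\tau_{\a'}(I) = \min\{\a' \cdot \e_F \mid F \in \H^\vee\}.
\]
For each $F \in \H^\vee$, we have $\a' \cdot \e_F = \a \cdot \e_F - (\e_i \cdot \e_F) \ge \a \cdot \e_F - 1 \ge \tau_\a(I) - 1 \ge k$. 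Hence $\tau_{\a'}(I) \ge k$, and Proposition \ref{symbolic} yields $x^{\a'} \in I^{(k)}$. But $x^{\a'}$ strictly divides $x^\a$, contradicting the assumption that $x^\a$ is a minimal generator of $I^{(k)}$. Therefore $\tau_\a(I) = k$.

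I do not expect any serious obstacle here: the argument is essentially a one-line perturbation using the blocker formulation of $\tau_\a(I)$. The only point that needs a tiny bit of care is ruling out the trivial case $\a = \0$ (handled by noting $I$ is proper and $k \ge 1$, so $1 \notin I^{(k)}$), and being explicit that the bound $\a' \cdot \e_F \ge \a \cdot \e_F - 1$ uses only that $\e_F \in \{0,1\}^n$, which follows from $I$ being squarefree (so that $\H^\vee$ really consists of $0$-$1$ incidence vectors, as in the proof of Lemma \ref{blocker}).
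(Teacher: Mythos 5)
Your proof is correct and follows essentially the same route as the paper: both apply Proposition \ref{symbolic} together with the blocker formula of Lemma \ref{blocker}, and show that $\tau_\a(I) > k$ would force $\tau_{\a-\e_i}(I) \ge k$ for $i \in \supp(\a)$, contradicting minimality of $x^\a$. The only cosmetic difference is that you fix a single index $i$ (after noting $\a \ne \0$) while the paper phrases the contradiction for all $i \in \supp(\a)$; the substance is identical.
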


\begin{proof}
It is clear that $x^\a$ is a minimal generator of $I^{(k)}$ if and only if
$x^\a \in I^{(k)}$ and $x^{\a-\e_i} \not\in I^{(k)}$ for all $i \in \supp(\a)$,
where $\e_i$ denotes the $i$-th unit vector in $\NN^n$.
By Proposition \ref{symbolic}, this means that $\tau_\a(I) \ge k$ and $\tau_{\a-\e_i}(I) \le k-1$ for all $i \in \supp(\a)$.

By Lemma \ref{blocker}, we have
$$\tau_\a(I) = \min\{\a \cdot \e_F|\ F \in \H^\vee\}.$$
Thus, if $\tau_\a(I) > k$ then for all $F \in \H^\vee$, $\a \cdot \e_F \ge k+1$. This implies that for all $i \in \supp(\a)$ and all $F \in \H^\vee$, we have $(\a - \e_i) \cdot \e_F \ge k$, i.e., $\tau_{\a-\e_i}(I) \ge k$, a contradiction.
\end{proof}

\begin{lemma} \label{generator-2}
Let $I$ be the edge ideal of a hypergraph $\H$ and $\a \in \NN^n$.
Let $I_\a$ denote the edge ideal of the parallelization $\H^\a$ in the polynomial ring
$$S = K[x_{ij}|\ i = 1,\dots,n,\ j = 1,\dots,\alpha_i].$$
Assume that $x^\a$ is a minimal generator of $I^{(k)}$, $k \ge 1$. Then \par
{\rm (i)} $\height(I_\a) = k$,\par
{\rm (ii)} Every variable $x_{ij}$ belongs to at least a minimal prime of $I_\a$ of height $k$.
\end{lemma}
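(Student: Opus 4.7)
My plan is to read both assertions off the dictionary between vertex covers of $\H$, of the restriction $\H_A$ with $A := \supp(\a)$, and of the parallelization $\H^\a$ recorded in Lemma~\ref{parallel-1}, Lemma~\ref{blocker}, and Proposition~\ref{parallel-2}(iv), using Lemma~\ref{generator-1} as the only genuinely new input. For~(i), the ideal $I_\a$ is squarefree monomial, so $\height(I_\a)$ equals the minimum cardinality of a vertex cover of $\H^\a$, namely $\tau(\H^\a)$. By Proposition~\ref{parallel-2}(iv) this is $\tau_\a(I)$, and Lemma~\ref{generator-1} identifies it with $k$.

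For~(ii), I will fix an arbitrary variable $x_{ij}$, so $i \in \supp(\a)$ and $1 \le j \le \alpha_i$, and exhibit a minimum vertex cover of $\H^\a$ containing $(i,j)$; by~(i) the associated minimal prime of $I_\a$ will then have height $k$. Since $x^\a$ is a minimal generator of $I^{(k)}$, the argument at the start of the proof of Lemma~\ref{generator-1} gives $\tau_{\a - \e_i}(I) \le k - 1$. By Lemma~\ref{blocker} there is therefore a minimal cover $F \in \H^\vee$ with $(\a - \e_i) \cdot \e_F \le k - 1$. Comparing with $\a \cdot \e_F \ge \tau_\a(I) = k$ and rewriting $\a \cdot \e_F - \e_i \cdot \e_F = (\a - \e_i) \cdot \e_F$ forces both $i \in F$ and $\a \cdot \e_F = k$.

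I then set $C := p^{-1}(F \cap A)$ inside the vertex set of $\H^\a$. Any edge of $\H^\a$ projects under $p$ to an edge of $\H$ contained in $A$; that projected edge meets $F$, hence meets $F \cap A$, so the original edge meets $C$. Thus $C$ is a vertex cover of $\H^\a$ with cardinality $\sum_{\ell \in F \cap A} \alpha_\ell = \a \cdot \e_F = k = \tau(\H^\a)$, which makes it a minimum and hence minimal vertex cover. Since $i \in F \cap A$, the associated minimal prime of $I_\a$ has height $k$ and contains $x_{i\ell}$ for every $\ell = 1, \dots, \alpha_i$, in particular $x_{ij}$.

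The only delicate point I foresee is the combinatorial pinning-down in the second paragraph—namely that the cover $F$ witnessing $\tau_{\a - \e_i}(I) \le k - 1$ must contain $i$ and have $\a$-weight exactly $k$—but this is immediate once one writes $(\a - \e_i) \cdot \e_F = \a \cdot \e_F - \e_i \cdot \e_F$ and notes that $\e_i \cdot \e_F \in \{0,1\}$. Everything else is straightforward bookkeeping in the dictionary between covers of $\H$, $\H_A$, and $\H^\a$ already developed in Sections~\ref{sec.membership} and~\ref{sec.matchingcovering}.
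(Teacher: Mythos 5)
Your proof is correct and in substance it is the paper's argument: part (i) is identical (Lemma \ref{generator-1} together with Proposition \ref{parallel-2}(iv) and $\height(I_\a)=\tau(\H^\a)$), and part (ii) rests on exactly the same ingredients --- minimal generation forcing $\tau_{\a-\e_i}(I)\le k-1$, Lemma \ref{blocker} to extract a minimal cover $F\in\H^\vee$ with $i\in F$ and $\a\cdot\e_F=k$, and the dictionary between covers of $\H^\a$ and minimal primes of $I_\a$. The only difference is presentational: the paper runs (ii) as a contradiction through Lemma \ref{parallel-1}(ii)'s description of $(\H^\a)^\vee$, whereas you argue directly, exhibiting $C=p^{-1}(F\cap A)$ and certifying its minimality by the count $|C|=\a\cdot\e_F=k=\tau(\H^\a)$; this contrapositive phrasing is a slightly cleaner route since it bypasses the structure of $(\H^\a)^\vee$ altogether, but it is not a genuinely different proof.
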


\begin{proof}
(i) By Lemma \ref{generator-1}, we have $\tau_\a(I) = k$.
Hence, $\tau(\H^\a) = k$ by Proposition \ref{parallel-2}(iv).
This implies $\height(I_\a) = \tau(\H^\a) = k$.
\par

(ii) Assume that there is a variable $x_{ij}$ that does not belong to any minimal prime of $I_\a$ of height $k$.
Then $i \in \supp(\a)$ and $(i,j) \not\in E$ for all $E \in (\H^\a)^\vee$. By Lemma \ref{parallel-1},
$(\H^\a)^\vee = \{p^{-1}(F)|\ F \in \H^\vee\}.$
Hence, $i \not\in F$ for all $F \in \H^\vee$.
This implies that $\e_i \cdot  \e_F = 0$ for all $F \in \H^\vee$.
Therefore, $(\a-\e_i) \cdot \e_F = \a \cdot \e_F$. By Lemma \ref{blocker}, we now have
$$\tau_{\a-\e_i}(I) = \tau_\a(I) = k.$$
Hence, $x^{\a - \e_i} \in I^{(k)}$ by Proposition \ref{symbolic}.
It follows that $x^\a$ is not a minimal generator of $I^{(k)}$, a contradiction.
\end{proof}

\begin{proposition} \label{equi}
Let $f : \NN \to \NN$ be a numerical non-decreasing function.
The following conditions are equivalent:\par
{\rm (i)} For any squarefree monomial ideal $I$, $d(I^{(k)}) \le kf(d(I))$ for all $k \ge 1$.   \par
{\rm (ii)} $n \le \height(I)f(d(I))$ for every squarefree monomial ideal $I$ in $n$ variables such that every variable appears in at least a minimal prime of $I$ with minimal height.
\end{proposition}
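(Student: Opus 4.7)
The plan is to establish the two implications separately, using the symbolic-power membership criterion (Proposition \ref{symbolic}), the blocker description (Lemma \ref{blocker}), and the parallelization results (Lemmas \ref{parallel-1}, \ref{parallel-2}, and \ref{generator-2}).

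For (i) $\Rightarrow$ (ii), suppose $I$ is a squarefree monomial ideal in $n$ variables with $h := \height(I)$ and with the property that every variable appears in some minimal prime of $I$ of height $h$. I claim that $x^{\1^n} = x_1 \cdots x_n$ is a minimal generator of $I^{(h)}$. Writing $\H$ for the associated hypergraph, Lemma \ref{blocker} gives $\tau_{\1^n}(I) = \min_{F \in \H^\vee} |F| = h$, so $x^{\1^n} \in I^{(h)}$ by Proposition \ref{symbolic}. Moreover, for each index $i$, the hypothesis furnishes a minimal cover $F_i \in \H^\vee$ with $|F_i| = h$ and $i \in F_i$; then $(\1^n - \e_i) \cdot \e_{F_i} = h - 1$, so $\tau_{\1^n - \e_i}(I) \le h - 1$ and $x^{\1^n - \e_i} \notin I^{(h)}$. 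Hence $x^{\1^n}$ is minimal in $I^{(h)}$, giving $d(I^{(h)}) \ge n$. Applying (i) with $k = h$ yields $n \le d(I^{(h)}) \le h\, f(d(I)) = \height(I)\, f(d(I))$.

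For (ii) $\Rightarrow$ (i), let $I$ be any squarefree monomial ideal and let $x^\a$ be a minimal generator of $I^{(k)}$, so that $\deg(x^\a) = |\a| = \alpha_1 + \cdots + \alpha_n$. Consider the parallelization $\H^\a$ of the associated hypergraph $\H$ and its edge ideal $I_\a$, which lives in a polynomial ring $S$ on $|\a|$ variables. By Lemma \ref{generator-2}, $\height(I_\a) = k$ and every variable of $S$ appears in a minimal prime of $I_\a$ of height $k$, so $I_\a$ satisfies the hypotheses of (ii). Since each edge of $\H^\a$ has the same cardinality as its projection in $\H$, we have $d(I_\a) = \rk(\H^\a) \le \rk(\H) = d(I)$. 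Applying (ii) to $I_\a$ and using that $f$ is non-decreasing,
$$|\a| \le \height(I_\a)\, f(d(I_\a)) \le k\, f(d(I)).$$
Since this bound holds for every minimal generator of $I^{(k)}$, we conclude $d(I^{(k)}) \le k\, f(d(I))$.

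The main conceptual step is the (ii) $\Rightarrow$ (i) direction, where the parallelization construction converts the non-uniform information encoded in an arbitrary minimal generator $x^\a$ into a squarefree ideal $I_\a$ whose height and minimal-prime structure are rigidly controlled by $k$; Lemma \ref{generator-2} is exactly the bridge that makes the hypothesis of (ii) applicable. In the opposite direction, the only subtlety is verifying that the assumption \emph{every variable appears in a minimum-height minimal prime} produces precisely the cancellation of $h$ by $1$ needed to push $\tau_{\1^n - \e_i}(I)$ strictly below $h$, and hence to certify that $x^{\1^n}$ is a minimal generator of $I^{(h)}$ realizing the extremal degree $n$.
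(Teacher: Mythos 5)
Your proof is correct and follows essentially the same route as the paper: for (i) $\Rightarrow$ (ii) both arguments show $x_1\cdots x_n$ is a minimal generator of $I^{(\height(I))}$ (the paper checks this directly against the primary decomposition $\bigcap_{F\in\H^\vee}P_F^k$, while you phrase it through $\tau_\a$ via Lemma \ref{blocker} and Proposition \ref{symbolic}, which is the same computation), and for (ii) $\Rightarrow$ (i) both pass to the parallelization $\H^\a$ of a minimal generator $x^\a$ of $I^{(k)}$ and invoke Lemma \ref{generator-2} together with $d(I_\a)\le d(I)$ and the monotonicity of $f$. No gaps.
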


\begin{proof}
Assume that (i) is satisfied.
Let $I$ be a squarefree monomial ideal in $n$ variables such that every variable appears in at least a minimal prime of $I$ with minimal height. Let $\H$ be a hypergraph such that $I$ is the edge ideal.
We know that
$$I^{(k)} = \bigcap_{F \in \H^\vee} P_F^k.$$
Let $k = \height(I)$. Since every minimal prime $P_F$ of $I$ is generated by at least $k$ variables,
$x_1 \cdots x_n \in P^k$. Hence, $x_1 \cdots x_n \in I^{(k)}$.
Since every variable $x_i$ appears in at least a minimal prime of $I$ generated by $k$ variables,
$(x_1 \cdots x_n)/x_i \not\in P_F^k$ for some $F \in \H^\vee$.  Hence, $(x_1 \cdots x_n)/x_i \not\in I^{(k)}$.
It follows that $x_1 \cdots x_n$ is a minimal generator of $I^{(k)}$.
Hence, $n \le d(I^{(k)}).$
Since $d(I^{(k)}) \le kf(d(I))$, we obtain $n \le kf(d(I)) = \height(I)f(d(I)).$ \par

Assume that (ii) is satisfied.
Let $I$ be an arbitrary squarefree monomial ideal.
Let $\H$ be a hypergraph such that $I$ is the edge ideal of $\H$.
Let $x^\a$ be an arbitrary minimal generator of $I^{(k)}$ with $\deg x^\a = d(I^{(k)})$.
Let $I_\a$ denote the edge ideal of the parallelization $\H^\a$.
Then $d(I_\a) \le d(I)$ by the definition of $\H^\a$, and $\height(I_\a) = k$ by Lemma \ref{generator-2}(i).
Since $\H^\a$ has $\alpha_1 + \cdots + \alpha_n$ variables, $I_\a$ lies in a polynomial ring $S$ in $\deg x^\a$ variables.
By Lemma \ref{generator-2}(ii), every variable of $S$ belongs to at least a minimal prime of $I_\a$ of minimal height.
Therefore, $\deg x^\a \le \height(I_\a)f(d(I_\a))$ and, hence, $d(I^{(k)}) \le kf(d(I))$.
\end{proof}

If Problem \ref{Huneke} has a positive answer, we would have $n \le \height(I)d(I)$ for every squarefree monomial ideal $I$ in $n$ variables such that every variable appears in at least a minimal prime of $I$ with minimal height.
To find a counter-example to Problem \ref{Huneke}, we only need to look for such a squarefree monomial ideal $I$ with small $\height(I)$ and $d(I)$ in a polynomial ring with a large number of variables. \par

In fact, there are squarefree monomial ideals with $\height(I) = 2$ such that $n - 2d(I)$ is arbitrarily large. From this it follows that $d(I^{(2)}) - 2d(I)$ can be arbitrarily large, too.

\begin{example} \label{counter}
Let $m \ge 2$ be an arbitrary integer. Let $\H$ be the graph on $n = 3(m+1)$ vertices which consists of a triangle $T$ and $3m$ leaves, where every vertex of $T$ is has exactly $m$ leaves; see Figure 2. 

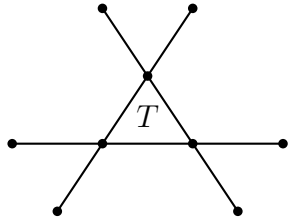
\begin{figure}[ht!]
\begin{tikzpicture}[scale=0.6]

\draw [thick] (0,0) coordinate (a) -- (2,0) coordinate (b) ;
\draw [thick] (0,0) coordinate (a) -- (1,1.5) coordinate (c) ;
\draw [thick] (2,0) coordinate (b) -- (1,1.5) coordinate (c) ;
\draw [thick] (0,0) coordinate (a) -- (-2,0) coordinate (d);
\draw [thick] (0,0) coordinate (a) -- (-1,-1.5) coordinate (e);
\draw [thick] (1,1.5) coordinate (c) -- (0,3) coordinate (f);
\draw [thick] (1,1.5) coordinate (c) -- (2,3) coordinate (g);
\draw [thick] (2,0) coordinate (b) -- (4,0) coordinate (h);
\draw [thick] (2,0) coordinate (b) -- (3,-1.5) coordinate (i);

\draw (1,0.6) node{$T$};

\fill (a) circle (3pt);
  \fill (b) circle (3pt);
  \fill (c) circle (3pt);
  \fill (d) circle (3pt);
  \fill (e) circle (3pt);
  \fill (f) circle (3pt);
  \fill (g) circle (3pt);
  \fill (h) circle (3pt);
  \fill (i) circle (3pt);

\end{tikzpicture}
\caption{A hypergraph consisting of a triangle and 6 leaves}
\end{figure}

Let $I = \cap_{{i,j} \in \H} (x_i,x_j)$. Then $\height(I) = 2$.
To compute the minimal generators of $I$ we have to find the minimal covers of $\H$.
A minimal cover of $H$ must contain at least 2 vertices of $T$.
Using this fact one can see that a minimal cover of $\H$ is either the set of the 3 vertices of $T$ or a set which consists of 2 vertices of $T$ and the $m$ vertices adjacent to the remaining vertex of $T$.
From this it follows that $d(I) = m+2$. Hence,
$$n = 3(m+1) > 2(m+2) = \height(I)d(I).$$
In particular, $d(I^{(2)}) - 2d(I) \ge n - 2d(I) = m-1$ can be arbitrarily large.
\end{example}

Counter-examples to Problem \ref{Huneke} were found in Asgharzadeh  \cite{As} (with an attribute to Hop D. Nguyen).
In these examples, $I$ is a non-monomial radical ideal with $d(I^{(2)}) - 2d(I) = 1$.

Due to Example \ref{counter}, we modify Problem \ref{Huneke} as follows.

\begin{problem} \label{prob.new}
Let $I$ be a squarefree monomial ideal.
Does there exist a function $f: \NN \longrightarrow \NN$ such that
$d(I^{(k)}) \le kf(d(I))$ for all $k \ge 1$?
\end{problem}

If $I$ is the edge ideal of a hypergraph, and if the involved invariants do not increase when passing to the edge ideals of parallelizations of the given hypergraph, then Problem \ref{prob.new} is amount to the question of whether $n \le \height(I)f(d(I))$ as in Proposition \ref{equi}.



\begin{thebibliography}{99}

\bibitem{As} M. Asgharzadeh,
Huneke's degree-computing problem,
Preprint, 2017, arXiv:1609.04323.v2.
\bibitem{Ba} T. Bauer, S. Di Rocco, B. Harbourne, M. l. Kapustka, A. Knutsen, W. Syzdek, and T. Szemberg,
A primer on Seshadri constants,
Contemporary Mathematics 496 (2009), 33--70.
\bibitem{BT1} S. Baum and L. E. Trotter,
Integer rounding for polymatroid and branching optimization problems,
SIAM J. Algebraic Discrete Methods 2 (1981), 416--425.
\bibitem{BT2} S. Baum and L. E. Trotter,
Finite checkability for integer rounding properties in combinatorial programming problems,
Math. Programming 22 (1982), 141--147.
\bibitem{B.etal} C. Bocci, S. Cooper, E. Guardo, B. Harbourne, M. Janssen, U. Nagel, A. Seceleanu, A. Van Tuyl, and T. Vu,
The Waldschmidt constant for squarefree monomial ideals.
J. Algebraic Combin. 44 (2016), no. 4, 875--904.
\bibitem{BH} C. Bocci and B. Harbourne,
Comparing powers and symbolic powers of ideals,
J. Algebraic Geom. 19 (2010), 399--417.
\bibitem{CC} M. Conforti and G. Cornu\'ejols,
Clutters that pack and the max flow min cut property: a conjecture,
Management Science Research Report, 1993, http://handle.dtic.mil/100.2/ADA277340
\bibitem{CEHH} S. Cooper, R.J.D. Embree, H.T. H\`a and A.H. Hoefel,
Symbolic powers of monomial ideals. Proc. Edinb. Math. Soc. (2) 60 (2017), no. 1, 39--55.
\bibitem{Da} H. Dao, A. De Stefani, E. Grifo, C. Huneke, and L. Núñez-Betancourt,
Symbolic powers of ideals, in: Singularities and foliations, geometry, topology and applications, 387--432,
Springer Proc. Math. Stat. 222, 2018.
\bibitem{DFMS} M. Dipasquale, C.A. Francisco, J. Mermin and J. Schweig,
Asymptotic resurgence via integral closures.
Preprint (2018), arXiv:1808.01547.
\bibitem{Du} P. Duchet, Hypergraphs,
In Handbook of Combinatorics, Ed. by R. Graham, M. Gr\"otschel and L. Lov\'asz, 1995.
\bibitem{DV} L. A. Dupont and R. H. Villarreal,
Edge ideals of clique clutters of comparability graphs and the normality of monomial ideals,
Math. Scand. 106 (2010), 88--98.
\bibitem{ELS} L. Ein, R. Lazarsfeld, and K. E. Smith,
Uniform bounds and symbolic powers on smooth varieties,
Invent. Math. 144 (2001), 241--25.
\bibitem{FHM} C. A. Francisco, H. T. H\`a, and J. Mermin, 
Powers of square-free monomial ideals and combinatorics,
in: Commutative algebra, 373--392, Springer, 2013.
\bibitem{F} Z. F\"uredi,
Maximum degrees and fractional matchings in uniform hypergraphs. Combinatorica {\bf 1} (1981), 154-162.
\bibitem{FKS} Z. F\"uredi, J. Kahn, and P. D. Seymour,
On the fractional matching polytope of a hypergraph,
Combinatorica 13 (1993), 167--180.
\bibitem{GVV} I. Gitler, C. E. Valencia, and R. Villarreal,
A note on Rees algebras and the MFMC property,
Contributions to Algebra and Geometry 48 (2007), 141--150
\bibitem{HHT} J. Herzog, T. Hibi, and N.V. Trung,
Symbolic powers of monomial ideals and vertex cover algebras.
Adv. Math. 210 (2007), no. 1, 304--322.
\bibitem{HHTZ} J. Herzog, T. Hibi, N.V. Trung, and X. Zheng,
Standard graded vertex cover algebras, cycles and leaves,
Trans. Amer. Math. Soc. 360 (2008), 6231--6249.
\bibitem{He} J. R. Henderson,
Permutation Decompositions of (0,1)-matrices and decomposition transversals,
Thesis, Caltech, 1971.
\bibitem{HH} M. Hochster and C. Huneke,
Comparison of symbolic and ordinary powers of ideals,
Invent. Math. 147 (2002), 349--369.
\bibitem{Huneke} C. Huneke,
Uniform bounds in Noetherian rings,
Invent. Math. 107 (1992), no. 1, 203--223.
\bibitem{Hu}  C. Huneke,
Open problems on powers of ideals,
Notes for the Workshop on Integral Closure, Multiplier Ideals and Cores, AIM, December 2006,
https://www.aimath.org/WWN/integralclosure/Huneke.pdf
\bibitem{LS} J. Lipman and A. Sathaye,
Jacobian ideals and a theorem of Brian\c{c}on-Skoda,
Michigan Math. J. 28 (1981), 199--222.
\bibitem{LT} J. Lipman and B. Teissier,
Pseudorational local rings and a theorem of Brian\c{c}on-Skoda about integral closures of ideals,
Michigan Math. J. 28 (1981), 97--116.
\bibitem{Lo1} L. Lov\'asz,
On minimax theorems of combinatorics,
PhD Thesis, Mathematikai Lapok 26 (1975), 209-264.
\bibitem{Lo2}  L. Lov\'asz,
On the ratio of optimal integral and fractional covers, Discrete Math. 13 (1975), 383-390.
\bibitem{MaS} L. Ma and K. Schwede,
Perfectoid multiplier/test ideals in regular rings and bounds on symbolic powers,
Preprint 2017, arXiv:1705.02300.
\bibitem{MNR} R. Medina, C. Noyer, and O. Raynaud,
Twins vertices in hypergraphs,
Electronic Notes in Discrete Mathematics 27 (2006), 87--89.
\bibitem{SU} E. R. Scheinerman and D. H. Ullman,
Fractional graph theory. A rational approach to the theory of graphs. With a foreword by Claude Berge. Reprint of the 1997 original. Dover Publications, Inc., Mineola, NY, 2011. xviii+211 pp.
\bibitem{Sch} A. Schrijver,
Combinatorial Optimization: Polyhedra and Efficiency, Volume 1, Springer, 2003.
\bibitem{T1} N. V. Trung,
Integral closures of monomial ideals and Fulkersonian hypergraphs,
Vietnam J. Math. 34 (2006), 489-494.
\bibitem{T2} N.V. Trung,
Square-free monomial ideals and hypergraphs,
Notes for the Workshop on Integral Closure, Multiplier Ideals and Cores, AIM, December 2006,
https://www.aimath.org/WWN/integralclosure/Trung.pdf
\end{thebibliography}
\end{document}